\def\rr{{\mathbb R}}
\def\rn{{\mathbb{R}^n}}
\def\zz{{\mathbb Z}}
\def\cc{{\mathbb C}}
\def\nn{{\mathbb N}}
\def\cp{{\mathcal P}}
\def\cs{{\mathcal S}}
\def\fz{\infty }
\def\az{\alpha}
\def\bz{\beta}
\def\dz{\delta}
\def\gz{\gamma}
\def\lz{\lambda}
\def\lf{\left}
\def\r{\right}
\def\ls{\lesssim}
\def\noz{\nonumber}
\def\wz{\widetilde}
\def\com{\complement}
\def\supp{\mathop\mathrm{\,supp\,}}
\def\esup{\mathop\mathrm{\,ess\,sup\,}}
\def\B{\mathfrak{B}}
\def\var{\varepsilon}
\def\HL{M_{{\rm HL}}}
\def\vh{{H_A^{p(\cdot)}(\rn)}}
\def\vhlpq{{H_A^{p(\cdot),q}(\rn)}}
\def\vah{{H_A^{p(\cdot),r,s}(\rn)}}
\def\vmh{{H_A^{p(\cdot),r,s,\varepsilon}(\rn)}}
\def\vfah{{H_{A,\,{\rm fin}}^{p(\cdot),r,s}(\rn)}}
\def\vfahfz{{H_{A,\,{\rm fin}}^{p(\cdot),\fz,s}(\rn)}}
\def\lfz{{L^{\fz}(\rn)}}
\def\lv{{L^{p(\cdot)}(\rn)}}
\def\hvz{H_A^{\varphi}(\rn)}
\newtheorem{theorem}{Theorem}[section]
\newtheorem{lemma}[theorem]{Lemma}
\theoremstyle{definition}
\newtheorem{remark}[theorem]{Remark}
\newtheorem{definition}[theorem]{Definition}
\renewcommand{\appendix}{\par
   \setcounter{section}{0}%
   \setcounter{subsection}{0}%
   \setcounter{subsubsection}{0}%
   \gdef\thesection{\@Alph\c@section}%
   \gdef\thesubsection{\@Alph\c@section.\@arabic\c@subsection}%
   \gdef\theHsection{\@Alph\c@section.}%
   \gdef\theHsubsection{\@Alph\c@section.\@arabic\c@subsection}%
   \csname appendixmore\endcsname
 }
\numberwithin{equation}{section}
\begin{document}

\arraycolsep=1pt

\title{\bf\Large Molecular Characterizations of Variable Anisotropic Hardy Spaces
with Applications to Boundedness of Calder\'on--Zygmund Operators
\footnotetext{\hspace{-0.35cm} 2010 {\it
Mathematics Subject Classification}. Primary 42B35;
Secondary 42B30, 42B20, 46E30.
\endgraf {\it Key words and phrases.}
expansive matrix, (variable) Hardy space, molecule, Calder\'{o}n--Zygmund operator.
\endgraf The author is supported by the Fundamental Research Funds for the Central Universities
(Grant No.~JSX200211).}}
\author{Jun Liu}
\date{}
\maketitle

\vspace{-0.8cm}

\begin{center}
\begin{minipage}{13cm}
{\small {\bf Abstract}\quad
Let $p(\cdot):\ \mathbb{R}^n\to(0,\infty]$ be a variable exponent
function satisfying the globally log-H\"{o}lder continuous condition
and $A$ a general expansive matrix on $\mathbb{R}^n$.
Let $H_A^{p(\cdot)}(\mathbb{R}^n)$ be the variable anisotropic
Hardy space associated with $A$ defined via the non-tangential grand maximal function.
In this article, via the known atomic characterization of $H_A^{p(\cdot)}(\mathbb{R}^n)$,
the author establishes its molecular characterization with the known best possible
decay of molecules.
As an application, the author obtains a criterion on the boundedness of
linear operators on $H_A^{p(\cdot)}(\mathbb{R}^n)$, which is used to prove
the boundedness of anisotropic Calder\'on--Zygmund operators on
$H_A^{p(\cdot)}(\mathbb{R}^n)$. In addition, the boundedness of
anisotropic Calder\'on--Zygmund operators
from $H_A^{p(\cdot)}(\mathbb{R}^n)$ to the variable Lebesgue space
$L^{p(\cdot)}(\mathbb{R}^n)$ is also presented. All these
results are new even in the classical isotropic setting.}
\end{minipage}
\end{center}

\vspace{0.2cm}

\section{Introduction\label{s1}}

The main purpose of this article is to establish a molecular characterization of the
variable anisotropic Hardy space $\vh$ from \cite{lwyy18}, where $p(\cdot):\ \rn\to(0,\fz]$
is a variable exponent function satisfying the so-called globally log-H\"{o}lder continuous
condition [see \eqref{2e4} and \eqref{2e5} below] and $A$ a general expansive matrix on $\rn$
(see Definition \ref{2d1} below). It is well known that a molecule is a natural
generalization of a atom with the support condition replaced by some decay condition.
Note that, to obtain the boundedness of linear operators, which
are bounded on $L^2(\rn)$, from the classic Hardy space $H^p(\rn)$
to the Lebesgue space $L^p(\rn)$ with $p\in(0,1]$, it suffices to show that the
$L^p(\rn)$ norm of the image of these operators acting on $(p,2,s)$-atoms can be uniformly
controlled by a harmless positive constant (see \cite[Corollary 1.3]{hll12}).
However, it is complicated to investigate the boundedness of these
operators on $H^p(\rn)$ due to the fact that the image of these operators acting on atoms
may no longer be atoms. Fortunately, instead of atoms,
one can use molecules because many of these linear
operators (for instance, one of the most basic operators in harmonic analysis,
Calder\'on--Zygmund operators) usually map an atom into a harmless positive constant multiple
of a related molecule, which further implies the desired
boundedness. Thus, the molecular characterization plays a key role in studying the boundedness
of many important operators on Hardy-type spaces;
see, for instance, \cite{cgn19,cw77,ll02,lhy18,lql,lu,tw80}.

Recall that, as a generalization of the classical Hardy space $H^p(\rn)$, the variable Hardy
space $H^{p(\cdot)}(\rn)$, with the constant exponent $p$ replaced by a variable
exponent function $p(\cdot):\ \rn\to(0,\fz]$, was first introduced by Nakai and Sawano
\cite{ns12} and, independently, by Cruz-Uribe and Wang \cite{cw14} with some weaker assumptions
on $p(\cdot)$ than those used in \cite{ns12}. Later, Sawano \cite{s10}, Yang et al. \cite{yzn16}
and Zhuo et al. \cite{zyy18} further completed the real-variable
theory of these variable Hardy spaces.
For more progress about function spaces with variable exponents, we refer the reader to
\cite{ah10,abr16,cf13,dhhr11,dhr09,jzz17,kv14,xu08,yyyz16,zsy16}. On the other hand, motivated
by the important role of discrete groups of dilations in wavelet theory, Bownik \cite{mb03}
originally introduced the anisotropic Hardy space $H_A^p(\rn)$, with $p\in(0,\fz)$,
which also gave a unified framework of the real-variable theory of both the classical
Hardy space and the parabolic Hardy space of Calder\'{o}n and Torchinsky \cite{ct75}.
Later on, Bownik et al. \cite{blyz08} further extended the anisotropic Hardy space
to the weighted setting. Nowadays, the anisotropic setting has proved useful not only
in developing function spaces, but also in many other branches such as the wavelet theory
(see, for instance, \cite{bb12,mb03,dhp09}) and partial differential equations
(see, for instance, \cite{bw19,jm06}).

Recently, Liu et al. \cite{lwyy18} introduced the variable anisotropic Hardy space
$\vh$ associated with $A$, via the non-tangential grand maximal function, and established
its various real-variable characterizations, respectively, by means of the radial or
the non-tangential maximal functions, atoms, finite atoms,
the Lusin area function, the Littlewood--Paley $g$-function or $g_{\lz}^\ast$-function.
As applications, the boundedness of the maximal operators of the Bochner--Riesz
and the Weierstrass means from $\vh$ to $\lv$ was also obtained in \cite{lwyy18}.
Moreover, these real-variable characterizations of the space $\vh$ have proved very useful
in the study on the real interpolation between $\vh$ and $L^\fz(\rn)$ (see \cite{lyy17hl}).

Nevertheless, the molecular characterization of $\vh$, which can be conveniently
used to prove the boundedness of many important operators (for instance,
Calder\'on--Zygmund operators) on the space $\vh$, is still missing. Therefore, to
further complete the real-variable theory of variable anisotropic Hardy spaces $\vh$,
in this article, we establish a molecular characterization of the space $\vh$,
where the range of the used decay index $\var$ is the known best possible in some sense
[see Remark \ref{3r1}(iv) below]. As an application, we then obtain a criterion
on the boundedness of linear operators on $\vh$ (see Theorem \ref{4t1} below),
which further implies the boundedness of anisotropic Calder\'on--Zygmund
operators on $\vh$. Finally, the boundedness
of anisotropic Calder\'on--Zygmund operators from $\vh$ to the variable Lebesgue space
$\lv$ is also presented.

The organization of the remainder of this article is as follows.

In Section \ref{s2}, we first give some notation used throughout this article,
and then recall some notions on expansive matrices and homogeneous quasi-norms as well
as variable Lebesgue spaces and variable anisotropic Hardy spaces.

Section \ref{s3} is devoted to establishing a molecular characterization of $\vh$
(see Theorem \ref{3t1} below). To this end, we first introduce the variable anisotropic
molecular Hardy space $\vmh$ (see Definition \ref{3d2} below). Then, by the known atomic
characterization of $\vh$ obtained in \cite[Theorem 4.8]{lwyy18}
(see also Lemma \ref{3l3} below), we easily find that $\vh\subset\vmh$ and the inclusion
is continuous. Thus, to prove Theorem \ref{3t1}, it suffices to show that $\vmh$ is
continuously embedded into $\vh$. Note that, to show the embedding of this type,
the well-known strategy is to decompose a molecule into
an infinite linear combination of the related atoms
(see, for instance, \cite[(7.4)]{lu} or \cite[(3.23)]{llll}),
which does not work in the present situation because the uniformly upper bound
estimate of the dual-bases of the natural projection of each molecule
on the infinite annuli of a dilated ball
(see \cite[(7.2)]{lu} or \cite[(3.18)]{llll}) is still unknown due to
its anisotropic structure. To overcome this difficulty,
by borrowing some ideas from the
proofs of \cite[Theorem 3.12]{lhy18} and \cite[Theorem 5.2]{ns12} as
well as fully using the integral
size condition of a molecule [see Definition \ref{3d1}(i) below], we directly
estimate the non-tangential maximal function of a molecule on the infinite annuli of a
dilated ball [see \eqref{3e7} below], and then obtain that $\vmh\subset\vh$ with
continuous inclusion, which completes the proof of Theorem \ref{3t1}.
Here we should point out that, in the proof of \cite[Theorem 3.12]{lhy18}, the useful
properties of the growth function $\varphi$ play a key role (see \cite[(3.5)]{lhy18});
however, that approach is obviously invalid in the present situation due to its
variable exponent setting. Instead, we use a technical lemma, which can reduce
some estimates related to
$\lv$ norms for some series of functions into dealing with the $L^q(\rn)$ norms of
the corresponding functions [see Lemma \ref{3l2} and \eqref{3e5} below].
In addition, in \cite[Theorem 5.2]{ns12}, Nakai and Sawano established a molecular
characterization of the variable isotropic Hardy space $H^{p(\cdot)}(\rn)$; however,
the molecule used in \cite{ns12} is associated with a particular pointwise size
condition, which is much stronger than the integral size condition of a molecule
used in Theorem \ref{3t1} below. In this sense, the conclusion
of Theorem \ref{3t1} also improves the corresponding one of \cite[Theorem 5.2]{ns12}.

As applications, in Section \ref{s4}, we obtain the boundedness of anisotropic
Calder\'on--Zygmund operators from $\vh$ to itself (see Theorem \ref{4t2} below)
or to the variable Lebesgue space $\lv$ (see Theorem \ref{4t3} below). To this end,
by the known finite atomic characterization of $\vh$ and
the molecular characterization of $\vh$ presented in Section \ref{s3}, we first
establish a useful criterion on the boundedness of linear operators on $\vh$
(see Theorem \ref{4t1} below), which shows that, for any given linear operator $T$,
if it maps each atom into a related molecule, then $T$ has a unique
bounded linear extension on $\vh$. Applying this criterion and an auxiliary lemma
from \cite{lql} (see also Lemma \ref{4l4} below), we then prove Theorem \ref{4t2}.
Moreover, a procedure similar to that used in the proof of Theorem \ref{4t2} with
some technical modifications shows that Theorem \ref{4t3} also holds true.

Finally, we make some conventions on notation.
We always let $\nn:=\{1,2,\ldots\}$, $\zz_+:=\{0\}\cup\nn$
and $\vec0_n$ be the \emph{origin} of $\rn$. For any given multi-index
$\gz:=(\gz_1,\ldots,\gz_n)\in(\zz_+)^n=:\zz_+^n$,
let $|\gz|:=\gz_1+\cdots+\gz_n$ and
$\partial^{\gz}
:=(\frac{\partial}{\partial x_1})^{\gz_1}\cdots(\frac{\partial}{\partial x_n})^{\gz_n}.$
We denote by $C$ a positive constant
which is independent of the main parameters, but may vary from line to line.
The \emph{symbol} $f\ls g$ means $f\le Cg$ and,
if $f\ls g\ls f$, then we write $f\sim g$. If $f\le Cg$ and $g=h$ or $g\le h$,
we then write $f\ls g\sim h$ or $f\ls g\ls h$, \emph{rather than} $f\ls g=h$
or $f\ls g\le h$. In addition,
for any set $E\subset\rn$, we denote by $\mathbf{1}_E$ its \emph{characteristic function},
by $E^\complement$ the
set $\rn\setminus E$ and by $|E|$ its \emph{n-dimensional Lebesgue measure}.
For any $r\in[1,\fz]$, we denote by $r'$
its \emph{conjugate index}, namely, $1/r+1/r'=1$ and by $\lfloor t\rfloor$
the \emph{largest integer not greater than $t$} for any $t\in\mathbb{R}$.

\section{Preliminaries \label{s2}}

In this section, we recall the notions of expansive matrices
and variable anisotropic Hardy spaces (see, for instance, \cite{mb03,lwyy18}).

The following definition of expansive matrices is from \cite{mb03}.

\begin{definition}\label{2d1}
A real $n\times n$ matrix $A$ is called an \emph{expansive matrix}
(shortly, a \emph{dilation}) if
$$\min_{\lz\in\sigma(A)}|\lz|>1,$$
here and thereafter, $\sigma(A)$ denotes the \emph{set of all eigenvalues of $A$}.
\end{definition}

Let $b:=|\det A|$. Then, by \cite[p.\,6, (2.7)]{mb03}, we know
that $b\in(1,\fz)$. From the fact that there exist an open
ellipsoid $\Delta$, with $|\Delta|=1$, and $r\in(1,\infty)$ such that
$\Delta\subset r\Delta\subset A\Delta$ (see \cite[p.\,5, Lemma 2.2]{mb03}),
it follows that, for any $i\in\zz$, $B_i:=A^i\Delta$ is open,
$B_i\subset rB_i\subset B_{i+1}$ and $|B_i|=b^i$.
For any $x\in\rn$ and $i\in\zz$, an ellipsoid $x+B_i$
is called a \emph{dilated ball}. Throughout this article,
we always use $\mathfrak{B}$ to denote the set of all such
dilated balls, namely,
\begin{align}\label{2e1}
\mathfrak{B}:=\lf\{x+B_i:\ x\in\rn,\ i\in\zz\r\}
\end{align}
and let
\begin{align}\label{2e2}
\tau:=\inf\lf\{k\in\zz:\ r^k\ge2\r\}.
\end{align}

The following definition on homogeneous quasi-norms
is just \cite[p.\,6, Definition 2.3]{mb03}.

\begin{definition}\label{2d2}
Let $A$ be a given dilation. A measurable mapping $\rho:\ \rn \to [0,\infty)$
is called a \emph{homogeneous quasi-norm}, associated with $A$, if
\begin{enumerate}
\item[\rm{(i)}] $x\neq\vec0_n$ implies $\rho(x)\in(0,\fz)$;

\item[\rm{(ii)}] for each $x\in\rn$, $\rho(Ax)=b\rho(x)$;

\item[\rm{(iii)}] there exists a constant $C\in[1,\fz)$
such that, for any $x$, $y\in\rn$, $\rho(x+y)\le C[\rho(x)+\rho(y)]$.
\end{enumerate}
\end{definition}

For any given dilation $A$, by \cite[p.\,6, Lemma 2.4]{mb03}, we can use the
\emph{step homogeneous quasi-norm} $\rho$ defined by setting, for any $x\in\rn$,
\begin{equation*}
\rho(x):=\sum_{i\in\zz} b^i\mathbf{1}_{B_{i+1}\setminus B_i}(x)\quad
{\rm when}\ x\neq\vec0_n,\qquad {\rm or\ else}
\quad\rho(\vec0_n):=0
\end{equation*}
for convenience.

For any measurable function $p(\cdot):\ \rn\to(0,\fz]$, let
\begin{align}\label{2e3}
p_-:=\mathop\mathrm{ess\,inf}_{x\in \rn}p(x),\hspace{0.35cm}
p_+:=\mathop\mathrm{ess\,sup}_{x\in \rn}p(x)\hspace{0.35cm}
{\rm and}\hspace{0.35cm}\underline{p}:=\min\{p_-,1\}.
\end{align}
Denote by $\cp(\rn)$ the \emph{set of all measurable functions}
$p(\cdot)$ satisfying $0<p_-\le p_+<\fz$.

Let $f$ be a measurable function on $\rn$ and $p(\cdot)\in\cp(\rn)$.
Then the \emph{modular functional} (or, for simplicity, the \emph{modular})
$\varrho_{p(\cdot)}$, associated with $p(\cdot)$, is defined by setting
$$\varrho_{p(\cdot)}(f):=\int_\rn|f(x)|^{p(x)}\,dx,$$ and the
\emph{Luxemburg} (also called \emph{Luxemburg--Nakano})
\emph{quasi-norm} $\|f\|_{\lv}$ by setting
\begin{equation*}
\|f\|_{\lv}:=\inf\lf\{\lz\in(0,\fz):\ \varrho_{p(\cdot)}(f/\lz)\le1\r\}.
\end{equation*}
Moreover, the \emph{variable Lebesgue space} $\lv$ is defined to be the
set of all measurable functions $f$ satisfying that $\varrho_{p(\cdot)}(f)<\fz$,
equipped with the quasi-norm $\|f\|_{\lv}$.

A function $p(\cdot)\in\cp(\rn)$ is said to satisfy the
\emph{globally log-H\"older continuous condition}, denoted by $p(\cdot)\in C^{\log}(\rn)$,
if there exist two positive constants $C_{\log}(p)$ and $C_\fz$, and
a constant $p_\fz\in\rr$ such that, for any $x,\ y\in\rn$,
\begin{equation}\label{2e4}
|p(x)-p(y)|\le \frac{C_{\log}(p)}{\log(e+1/\rho(x-y))}
\end{equation}
and
\begin{equation}\label{2e5}
|p(x)-p_\fz|\le \frac{C_\fz}{\log(e+\rho(x))}.
\end{equation}

Recall also that a \emph{Schwartz function} is an infinitely differentiable
function $\phi$ satisfying, for any $\ell\in\zz_+$ and multi-index $\az\in\zz_+^n$,
$$\|\phi\|_{\alpha,\ell}:=
\sup_{x\in\rn}[\rho(x)]^\ell\lf|\partial^\alpha\phi(x)\r|<\infty.$$
Let $\cs(\rn)$ be the set of all Schwartz functions as above, equipped with the topology
determined by $\{\|\cdot\|_{\alpha,\ell}\}_{\az\in\zz_+^n,\,\ell\in\zz_+}$, and
$\cs'(\rn)$ its \emph{dual space}, equipped with the weak-$\ast$
topology. For any $N\in\zz_+$, let
\begin{align*}
\cs_N(\rn):=
\lf\{\phi\in\cs(\rn):\ \|\phi\|_{\cs_N(\rn)}:=\sup_{\az\in\zz_+^n,\,|\alpha|\le N}
\sup_{x\in\rn}\lf[\lf|\partial^\alpha\phi(x)\r|\max\lf\{1,\lf[\rho(x)\r]^N\r\}\r]\le1\r\}.
\end{align*}
Throughout this article, for any $\phi\in\cs(\rn)$ and $i\in\zz$, let
$\phi_i(\cdot):=b^{-i}\phi(A^{-i}\cdot)$.

Let $\lz_-$, $\lz_+\in(1,\fz)$ be two numbers such that
$$\lz_-\le\min\{|\lz|:\ \lz\in\sigma(A)\}
\le\max\{|\lz|:\ \lz\in\sigma(A)\}\le\lz_+.$$
In particular, when $A$ is diagonalizable over $\mathbb{C}$, we can let
$$\lz_-:=\min\{|\lz|:\ \lz\in\sigma(A)\}
\quad{\rm and}\quad
\lz_+:=\max\{|\lz|:\ \lz\in\sigma(A)\}.$$
Otherwise, we can choose them sufficiently close to these equalities
in accordance with what we need in our arguments.

\begin{definition}\label{2d3}
Let $\phi\in\cs(\rn)$ and $f\in\cs'(\rn)$. The
\emph{non-tangential maximal function} $M_\phi(f)$,
associated to $\phi$, is defined by setting, for any $x\in\rn$,
\begin{align*}
M_\phi(f)(x):= \sup_{y\in x+B_i,\,i\in\zz}|f\ast\phi_i(y)|.
\end{align*}
Moreover, for any given $N\in\mathbb{N}$, the
\emph{non-tangential grand maximal function} $M_N(f)$ of
$f\in\cs'(\rn)$ is defined by setting, for any $x\in\rn$,
\begin{equation*}
M_N(f)(x):=\sup_{\phi\in\cs_N(\rn)}M_\phi(f)(x).
\end{equation*}
\end{definition}

The following variable anisotropic Hardy spaces were
first introduced by Liu et al. in \cite[Definition 2.4]{lwyy18}.

\begin{definition}\label{2d4}
Let $p(\cdot)\in C^{\log}(\rn)$ and
$N\in\mathbb{N}\cap[\lfloor(\frac1{\underline{p}}-1)\frac{\ln b}{\ln
\lz_-}\rfloor+2,\fz)$,
where $\underline{p}$ is as in \eqref{2e3}.
The \emph{variable anisotropic Hardy space} $\vh$ is defined by setting
\begin{equation*}
\vh:=\lf\{f\in\cs'(\rn):\ M_N(f)\in\lv\r\}
\end{equation*}
and, for any $f\in\vh$, let
$\|f\|_{\vh}:=\| M_N(f)\|_{\lv}$.
\end{definition}

Observe that, in \cite[Theorem 3.10]{lwyy18}, it was proved that
the space $\vh$ is independent of the choice of $N$ as in Definition \ref{2d4}.

\section{Molecular characterization of $\vh$\label{s3}}

In this section, we characterize $\vh$ by means of molecules.
Recall that, for any $q\in(0,\fz]$ and measurable set $E\subset\rn$, the Lebesgue
space $L^q(E)$ is defined to be the set of all measurable functions $f$ on $E$
such that, when $q\in(0,\fz)$,
$$\|f\|_{L^q(E)}:=\lf[\int_E|f(x)|^q\,dx\r]^{1/q}<\fz$$
and
$$\|f\|_{L^\fz(E)}:=\esup_{x\in E}|f(x)|<\fz.$$

The following definition of anisotropic $(p(\cdot),r,s,\var)$-molecules
is from \cite{lql}.

\begin{definition}\label{3d1}
Let $p(\cdot)\in\cp(\rn)$, $r\in(1,\fz]$,
\begin{align}\label{3e2}
s\in\lf[\lf\lfloor\lf(\dfrac1{p_-}-1\r)
\dfrac{\ln b}{\ln\lz_-}\r\rfloor,\fz\r)\cap\zz_+
\end{align}
and $\var\in(0,\fz)$, where $p_-$ is as in \eqref{2e3}. A measurable function $m$
is called an \emph{anisotropic $(p(\cdot),r,s,\var)$-molecule}, associated to some
dilated ball $B:=x_0+B_{i_0}\in\B$ with $x_0\in\rn$, $i_0\in\zz$ and $\B$ as in
\eqref{2e1}, if
\begin{enumerate}
\item[(i)] for each $j\in\zz_+$,
$\|m\|_{L^r(U_j(B))}\le b^{-j\var}|B|^{1/r}\|\mathbf{1}_{B}\|_{\lv}^{-1}$,
where $U_0(B):=B$ and, for any $j\in\nn$,
$U_j(B)=U_j(x_0+B_{i_0}):=x_0+(A^j B_{i_0})\setminus(A^{j-1}B_{i_0})$;

\item[(ii)] for any multi-index $\az\in\zz_+^n$ with $|\az|\le s$,
$\int_{\rn}m(x)x^{\az}\,dx=0$.
\end{enumerate}
\end{definition}

In what follows, for convenience,
we always call an anisotropic $(p(\cdot),r,s,\var)$-molecule
simply by a $(p(\cdot),r,s,\var)$-molecule.
Now, using $(p(\cdot),r,s,\var)$-molecules, we introduce the variable
anisotropic molecular Hardy space $\vmh$ as follows.

\begin{definition}\label{3d2}
Let $p(\cdot)\in C^{\log}(\rn)$, $r$, $s$ and $\var$ be as in Definition \ref{3d1}.
The \emph{variable anisotropic molecular Hardy space} $\vmh$ is defined to be the
set of all $f\in\cs'(\rn)$ satisfying that there exist a sequence
$\{\lz_i\}_{i\in\nn}\subset\cc$
and a sequence of $(p(\cdot),r,s,\var)$-molecules, $\{m_i\}_{i\in\nn}$,
associated, respectively, to $\{B^{(i)}\}_{i\in\nn}\subset\B$ such that
\begin{align*}
f=\sum_{i\in\nn}\lz_im_i\quad\mathrm{in}\quad\cs'(\rn).
\end{align*}
Moreover, for any $f\in\vmh$, let
\begin{align*}
\|f\|_{\vmh}:=
{\inf}\lf\|\lf\{\sum_{i\in\nn}
\lf[\frac{|\lz_i|\mathbf{1}_{B^{(i)}}}{\|\mathbf{1}_{B^{(i)}}\|_{\lv}}\r]^
{\underline{p}}\r\}^{1/\underline{p}}\r\|_{\lv},
\end{align*}
where the infimum is taken over all decompositions of $f$ as above
and $\underline{p}$ as in \eqref{2e3}.
\end{definition}

To establish the molecular characterization of $\vh$, we need several technical
lemmas. First, the following Lemma \ref{3l1} is just \cite[Remark 4.4(i)]{lyy17hl}.

\begin{lemma}\label{3l1}
Let $p(\cdot)\in C^{\log}(\rn)$, $r\in(0,\underline{p})$ and $i\in\zz_+$.
Then there exists a positive
constant $C$ such that, for any sequence $\{B^{(k)}\}_{k\in\nn}\subset\B$,
$$\lf\|\sum_{k\in\nn}\mathbf{1}_{A^i B^{(k)}}\r\|_{\lv}\le
C b^{i/r}\lf\|\sum_{k\in\nn}\mathbf{1}_{B^{(k)}}\r\|_{\lv}.$$
\end{lemma}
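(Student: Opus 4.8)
The plan is to establish Lemma \ref{3l1} by reducing the $\lv$-estimate to a pointwise estimate on modular functionals, exploiting that the left-hand side is a dilate (by $A^i$) of the configuration on the right. First I would recall the standard normalization trick for Luxemburg quasi-norms: it suffices to prove that if $\|\sum_{k\in\nn}\mathbf{1}_{B^{(k)}}\|_{\lv}\le1$, then $\|\sum_{k\in\nn}\mathbf{1}_{A^iB^{(k)}}\|_{\lv}\le Cb^{i/r}$, and by homogeneity this reduces to bounding the modular $\varrho_{p(\cdot)}\big((Cb^{i/r})^{-1}\sum_{k}\mathbf{1}_{A^iB^{(k)}}\big)$ by $1$. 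Here I would use the elementary observation that, since the $B^{(k)}$ may overlap, $\sum_k\mathbf{1}_{A^iB^{(k)}}$ is a pointwise upper bound for $\mathbf{1}_{\bigcup_kA^iB^{(k)}}$ but the modular is not simply the measure of a set; nonetheless the quantity $g:=\sum_k\mathbf{1}_{B^{(k)}}$ satisfies $\|g\|_{\lv}\le1$, and the function $\sum_k\mathbf{1}_{A^iB^{(k)}}$ is obtained from $g$ by replacing each ball $B^{(k)}=x^{(k)}+B_{j_k}$ by $A^iB^{(k)}=A^ix^{(k)}+B_{j_k+i}$.

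The key step is a change-of-variables together with the log-Hölder continuity of $p(\cdot)$. Concretely, I would write $B^{(k)}=x^{(k)}+B_{j_k}$ and note $A^iB^{(k)}=A^i(x^{(k)}+B_{j_k})=A^ix^{(k)}+B_{j_k+i}$, so $|A^iB^{(k)}|=b^i|B^{(k)}|$. The heart of the matter is the inequality, valid for any $p(\cdot)\in C^{\log}(\rn)$, $r\in(0,\underline p)$, and any sequence $\{E_k\}$ of measurable sets with uniformly bounded overlap replaced by the honest sum: the modular of the ``pushed-forward'' sum under an $A^i$-dilation is controlled by $b^{i}$ times (a power of) the original modular. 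I expect the cleanest route is to invoke the already-available machinery from \cite{lwyy18} or \cite{lyy17hl} rather than reprove it, but since the statement is quoted as \cite[Remark 4.4(i)]{lyy17hl}, the honest ``proof'' here is really a citation; in a self-contained treatment one argues via the bounded-overlap/Fefferman--Stein vector-valued maximal inequality on $\lv$, namely $\|\sum_k\mathbf{1}_{A^iB^{(k)}}\|_{\lv}\lesssim\|\HL(\sum_k\mathbf{1}_{B^{(k)}})^{b^i}\|$-type bounds after observing $\mathbf{1}_{A^iB^{(k)}}\le b^i\HL(\mathbf{1}_{B^{(k)}})$ pointwise up to the anisotropic $A_\infty$ constant, and then summing in $k$ and using the $\lv$-boundedness of $\HL$ (which holds because $p(\cdot)\in C^{\log}(\rn)$ and $p_->1$ after rescaling by the exponent $r<\underline p\le p_-$, i.e. applying $\HL$-boundedness on $L^{p(\cdot)/r}$).

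In more detail, the key steps in order: (1) rescale so that $\|\sum_k\mathbf{1}_{B^{(k)}}\|_{\lv}=1$ and reduce to a modular bound; (2) use that for each $k$ and a.e.\ $y\in A^iB^{(k)}$ one has $\mathbf{1}_{A^iB^{(k)}}(y)\le C\,b^{i}\,\HL\big(\mathbf{1}_{B^{(k)}}\big)(y)$, where $\HL$ is the anisotropic Hardy--Littlewood maximal operator associated with $A$, since $A^iB^{(k)}$ is comparable to a dilated ball containing $B^{(k)}$ with measure ratio $b^i$; (3) take $\ell$-th powers with $\ell:=1/r>1/\underline p\ge1$ so that $\ell p_->1$, sum in $k$ using $\ell\ge1$ and the triangle/superadditivity inequality $\big(\sum_ka_k\big)^{1/\ell}\le\sum_k a_k^{1/\ell}$ reversed appropriately, and apply the $L^{p(\cdot)}$-boundedness of $\HL$ on the scale $p(\cdot)$ (valid for $p(\cdot)\in C^{\log}(\rn)$ with $p_->1$; here one passes to the exponent $p(\cdot)/r$ which satisfies $(p/r)_->1$); (4) collect the factor $b^{i}$ raised to the appropriate power, which after undoing the $\ell$-th powers yields the claimed $b^{i/r}$. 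The main obstacle is step (3): one must be careful that the Fefferman--Stein type vector-valued inequality is being applied on the correct rescaled exponent function and that the log-Hölder hypothesis is genuinely used to guarantee $\HL$ is bounded on $L^{p(\cdot)/r}(\rn)$; the anisotropic $\HL$-boundedness on variable Lebesgue spaces under $p(\cdot)\in C^{\log}(\rn)$ is exactly the ingredient that makes the $b^{i/r}$ (rather than $b^i$) gain possible, and verifying its applicability is where the real content lies.
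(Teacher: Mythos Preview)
The paper does not give a proof of this lemma; it simply quotes it as \cite[Remark~4.4(i)]{lyy17hl}. So there is no ``paper's proof'' to compare against. Your maximal-function approach is in fact the standard argument that lies behind that reference, and it is correct in outline.

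Two points where your write-up should be tightened. First, in this paper's notation $A^iB^{(k)}$ with $B^{(k)}=x^{(k)}+B_{j_k}$ means $x^{(k)}+B_{j_k+i}$ (same center, larger scale), \emph{not} the linear image $A^ix^{(k)}+B_{j_k+i}$; compare \eqref{3e4} with \eqref{3e5}. Your first paragraph uses the linear-image reading and then floats a change-of-variables/modular argument; that route does not work cleanly for variable exponents, and you should drop it entirely in favor of the maximal-function argument you sketch afterward. Second, step~(3) is muddled: there is nothing to ``reverse''. The clean version is this. For $y\in A^iB^{(k)}$ one has $\HL(\mathbf{1}_{B^{(k)}})(y)\ge b^{-\tau}b^{-i}$, so $\mathbf{1}_{A^iB^{(k)}}(y)\le b^{\tau}b^{i}\HL(\mathbf{1}_{B^{(k)}})(y)$; since the left side is an indicator, raising to the power $1/r$ leaves it unchanged and yields
\[
\mathbf{1}_{A^iB^{(k)}}\le C\,b^{i/r}\bigl[\HL(\mathbf{1}_{B^{(k)}})\bigr]^{1/r}.
\]
Now sum in $k$, take $\|\cdot\|_{\lv}$, use Lemma~\ref{3l5} to pass to $L^{p(\cdot)/r}$, and apply Lemma~\ref{3l6} on $L^{p(\cdot)/r}$ with $\nu=1/r\in(1,\infty)$ (legitimate since $r<\underline{p}\le p_-$ forces $(p/r)_->1$ and $r<\underline{p}\le1$ forces $1/r>1$). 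Undoing the exponent gives exactly $\|\sum_k\mathbf{1}_{B^{(k)}}\|_{\lv}$ on the right with the factor $b^{i/r}$ isolated, and the resulting constant is independent of $i$.
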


By Lemma \ref{3l1} and an argument similar to that used in the proof of
\cite[Lemma 3.15]{hlyy19} with some slight modifications, we obtain the
following useful conclusion; the details are omitted.

\begin{lemma}\label{3l2}
Let $r(\cdot)\in C^{\log}(\rn)$ and $q\in[1,\fz]\cap(r_+,\fz]$
with $r_+$ as in \eqref{2e3}.
Assume that
$\{\lz_i\}_{i\in\nn}\subset\cc$, $\{B^{(i)}\}_{i\in\nn}\subset\B$
and $\{a_i\}_{i\in\nn}\subset L^q(\rn)$ satisfy, for any $i\in\nn$,
$\supp a_i\subset A^{i_0}B^{(i)}$ with some fixed $i_0\in\zz$,
$$\|a_i\|_{L^q(\rn)}
\le\frac{|B^{(i)}|^{1/q}}{\|\mathbf{1}_{B^{(i)}}\|_{L^{r(\cdot)}(\rn)}}$$
and
$$\lf\|\lf\{\sum_{i\in\nn}\lf[\frac{|\lz_i|\mathbf{1}_{B^{(i)}}}
{\|\mathbf{1}_{B^{(i)}}\|_{L^{r(\cdot)}(\rn)}}\r]^
{\underline{r}}\r\}^{1/\underline{r}}\r\|_{L^{r(\cdot)}(\rn)}<\fz,$$
where $\underline{r}$ is as in \eqref{2e3}. Then
$$\lf\|\lf[\sum_{i\in\nn}\lf|\lz_ia_i\r|^{\underline{r}}\r]
^{1/\underline{r}}\r\|_{L^{r(\cdot)}(\rn)}
\le C\lf\|\lf\{\sum_{i\in\nn}\lf[\frac{|\lz_i|\mathbf{1}_{B^{(i)}}}
{\|\mathbf{1}_{B^{(i)}}\|_{L^{r(\cdot)}(\rn)}}\r]^
{\underline{r}}\r\}^{1/\underline{r}}\r\|_{L^{r(\cdot)}(\rn)},$$
where $C$ is a positive constant independent of $\lz_i$, $B^{(i)}$ and $a_i$.
\end{lemma}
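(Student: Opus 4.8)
The plan is to reduce the quasi-norm estimate for the superposition $[\sum_i |\lambda_i a_i|^{\underline{r}}]^{1/\underline{r}}$ to an estimate that Lemma \ref{3l1} can handle, by first passing from the $L^{r(\cdot)}(\rn)$ quasi-norm to an ordinary $L^q(\rn)$ quasi-norm on each ball. The point is that, since $q>r_+\ge r(\cdot)$ pointwise, one can use the generalized Hölder inequality for variable Lebesgue spaces to compare, on the fixed dilated ball $A^{i_0}B^{(i)}$, the quantity $\|\mathbf 1_{A^{i_0}B^{(i)}}\|_{L^{r(\cdot)}(\rn)}$ with $|A^{i_0}B^{(i)}|^{1/q}$ times the reciprocal of the norm of $\mathbf 1_{A^{i_0}B^{(i)}}$ in the "dual" variable exponent; the log-Hölder continuity of $r(\cdot)$ gives that $\|\mathbf 1_{x+B_k}\|_{L^{r(\cdot)}(\rn)}$ behaves like $b^{k/r(x)}$ up to uniform constants, which is exactly what lets these ball-wise comparisons be done with constants independent of $i$.

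First I would fix the scale: write $B^{(i)}=x_i+B_{k_i}$, so that $\supp a_i\subset A^{i_0}B^{(i)}=x_i+B_{k_i+i_0}$, and record the two elementary facts (both standard under $r(\cdot)\in C^{\log}(\rn)$ and both implicitly used in \cite{hlyy19}): that for any dilated ball $x+B_k$ one has $\|\mathbf 1_{x+B_k}\|_{L^{r(\cdot)}(\rn)}\sim |B_k|^{1/r(x)}$ when $|B_k|\le 1$ and $\sim |B_k|^{1/r_\infty}$ when $|B_k|\ge1$, with uniform implicit constants, and that consequently $\|\mathbf 1_{A^{i_0}B^{(i)}}\|_{L^{r(\cdot)}(\rn)}\sim b^{i_0/r_\ast}\|\mathbf 1_{B^{(i)}}\|_{L^{r(\cdot)}(\rn)}$ for a suitable $r_\ast\in[r_-,r_+]$ (more precisely, this is again just Lemma \ref{3l1} applied to a single ball, together with its reverse inequality). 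Next I would estimate the left-hand quasi-norm by duality: for a nonnegative $g\in L^{(r(\cdot)/\underline r)'}(\rn)$ with unit norm, I would pair $g$ against $\sum_i|\lambda_i a_i|^{\underline r}$, use $\supp a_i\subset A^{i_0}B^{(i)}$ and Hölder's inequality in the exponents $q/\underline r$ and $(q/\underline r)'$ on each ball, then apply the size bound $\|a_i\|_{L^q(\rn)}\le |B^{(i)}|^{1/q}\|\mathbf 1_{B^{(i)}}\|_{L^{r(\cdot)}(\rn)}^{-1}$ to obtain a bound of the form $\sum_i [|\lambda_i|/\|\mathbf 1_{B^{(i)}}\|_{L^{r(\cdot)}(\rn)}]^{\underline r}\,|B^{(i)}|^{1-\underline r/q}\,\|\mathbf 1_{A^{i_0}B^{(i)}}g\|_{L^{(q/\underline r)'}}$; the factor $|B^{(i)}|^{1-\underline r/q}$ together with $\|\mathbf 1_{A^{i_0}B^{(i)}}\|$-comparisons is designed to combine with $\|\mathbf 1_{A^{i_0}B^{(i)}}g\|$ so as to reconstruct exactly the expression $\sum_i[|\lambda_i|\mathbf 1_{B^{(i)}}/\|\mathbf 1_{B^{(i)}}\|_{L^{r(\cdot)}(\rn)}]^{\underline r}$ integrated against $g$, up to the desired uniform constant. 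Taking the supremum over such $g$ and using the norm-conjugate formula for $\|\cdot\|_{L^{r(\cdot)}(\rn)}^{\underline r}=\|(\cdot)^{\underline r}\|_{L^{r(\cdot)/\underline r}(\rn)}$ then yields the claim; alternatively, and perhaps more cleanly, one avoids duality altogether and argues directly via the subadditivity of $\varrho_{r(\cdot)/\underline r}$ on sums of functions with controlled supports, which is precisely the mechanism of Lemma \ref{3l1}.

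The main obstacle I expect is making the ball-wise passage from $L^q$ to $L^{r(\cdot)}$ completely uniform in $i$: one has to control $|B^{(i)}|^{1/q}\big/\|\mathbf 1_{A^{i_0}B^{(i)}}\|_{L^{(q/\underline r)'\cdot\underline r'\text{-type exponent}}}$ against $\|\mathbf 1_{B^{(i)}}\|_{L^{r(\cdot)}(\rn)}^{-1}$, and because $r(\cdot)$ is only log-Hölder (not constant) the exponent "seen" by a ball depends on its center and on whether its measure exceeds $1$; handling the two regimes $|B^{(i)}|\lessgtr 1$ separately, and absorbing the fixed dilation by $A^{i_0}$ through Lemma \ref{3l1}, is where the real care is needed, though none of it is deep. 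Once those uniform comparisons are in place, the summation and the final application of Lemma \ref{3l1} (with its exponent parameter chosen in $(0,\underline r)$, legitimate since $q>r_+$) are routine, and the constant $C$ is manifestly independent of $\{\lambda_i\}$, $\{B^{(i)}\}$ and $\{a_i\}$, as required.
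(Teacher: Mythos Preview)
Your overall framework is right and matches the argument the paper cites from \cite[Lemma~3.15]{hlyy19}: raise to the power $\underline r$, dualize in $L^{r(\cdot)/\underline r}$ against some nonnegative $g$ with $\|g\|_{L^{(r(\cdot)/\underline r)'}}\le1$, and apply H\"older on each ball with exponents $q/\underline r$ and $(q/\underline r)'$. But the step you describe after that is not the one that works, and your plan as written does not close.

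After H\"older and the size bound on $a_i$ one arrives at
\[
\sum_{i}\frac{|\lambda_i|^{\underline r}\,|B^{(i)}|^{\underline r/q}}{\|\mathbf 1_{B^{(i)}}\|_{L^{r(\cdot)}}^{\underline r}}\,\bigl\|\mathbf 1_{A^{i_0}B^{(i)}}g\bigr\|_{L^{(q/\underline r)'}}
\]
(your exponent $1-\underline r/q$ on $|B^{(i)}|$ is a slip). You then propose to ``reconstruct $\sum_i[\,\cdots\,]^{\underline r}\mathbf 1_{B^{(i)}}$ integrated against $g$'' via comparisons of the type $\|\mathbf 1_{x+B_k}\|_{L^{r(\cdot)}}\sim|B_k|^{1/r(x)}$. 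This cannot succeed: H\"older gives $\int_B g\le|B|^{\underline r/q}\|g\mathbf 1_B\|_{L^{(q/\underline r)'}}$, not the reverse, so no ball-norm bookkeeping will dominate the local $L^{(q/\underline r)'}$ norm of $g$ by its integral. The asymptotics of $\|\mathbf 1_B\|_{L^{r(\cdot)}}$ and the two regimes $|B^{(i)}|\lessgtr1$ you dwell on play no role in the actual proof.

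The missing mechanism is the maximal function applied to $g$. Writing $\|\mathbf 1_{A^{i_0}B^{(i)}}g\|_{L^{(q/\underline r)'}}$ as $|A^{i_0}B^{(i)}|^{1-\underline r/q}$ times the $(q/\underline r)'$-average and bounding that average pointwise on $A^{i_0}B^{(i)}$ by $\HL(|g|^{(q/\underline r)'})$, one obtains
\[
\ls\int_\rn\Bigl(\sum_i\frac{|\lambda_i|^{\underline r}\mathbf 1_{A^{i_0}B^{(i)}}}{\|\mathbf 1_{B^{(i)}}\|_{L^{r(\cdot)}}^{\underline r}}\Bigr)\bigl[\HL\bigl(|g|^{(q/\underline r)'}\bigr)\bigr]^{1/(q/\underline r)'}\,dx.
\]
A second variable-exponent H\"older in $L^{r(\cdot)/\underline r}\times L^{(r(\cdot)/\underline r)'}$ separates the two factors; the hypothesis $q>r_+$ is used \emph{here}, since it forces $[(r(\cdot)/\underline r)']_-=(r_+/\underline r)'>(q/\underline r)'$, so that $\HL$ is bounded on $L^{(r(\cdot)/\underline r)'/(q/\underline r)'}$ and the $g$-factor is $\ls\|g\|_{L^{(r(\cdot)/\underline r)'}}\le1$. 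Only after this does Lemma~\ref{3l1} (or an equivalent Fefferman--Stein trick) enter, to pass from $\mathbf 1_{A^{i_0}B^{(i)}}$ to $\mathbf 1_{B^{(i)}}$; your remark that Lemma~\ref{3l1} is ``legitimate since $q>r_+$'' misattributes that hypothesis. The paper in fact carries out exactly this maximal-function computation, in full detail, when bounding ${\rm I}_1$ in the proof of Theorem~\ref{4t3}. Finally, your suggested modular-subadditivity alternative is not viable: Lemma~\ref{3l1} treats only characteristic functions, and the modular has no device for absorbing the $L^q$ size condition on the $a_i$.
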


The succeeding notions of both anisotropic $(p(\cdot),r,s)$-atoms and
variable anisotropic atomic Hardy spaces $\vah$ are from \cite{lwyy18}.

\begin{definition}\label{3d3}
\begin{enumerate}
\item[{\rm (i)}]
Let $p(\cdot)$, $r$ and $s$ be as in Definition \ref{3d1}.
An \emph{anisotropic $(p(\cdot),r,s)$-atom} (shortly, a $(p(\cdot),r,s)$-\emph{atom})
is a measurable function $a$ on $\rn$ satisfying
\begin{enumerate}
\item[{\rm (i)$_1$}] $\supp a \subset B$, where
$B\in\B$ with $\B$ as in \eqref{2e1};

\item[{\rm (i)$_2$}] $\|a\|_{L^r(\rn)}\le \frac{|B|^{1/r}}{\|\mathbf{1}_B\|_{\lv}}$;

\item[{\rm (i)$_3$}] $\int_{\rn}a(x)x^\gz\,dx=0$ for any $\gz\in\zz_+^n$
with $|\gz|\le s$.
\end{enumerate}
\item[{\rm (ii)}]
Let $p(\cdot)\in C^{\log}(\rn)$, $r\in(1,\fz]$ and $s$ be as in \eqref{3e2}.
The \emph{variable anisotropic atomic Hardy space} $\vah$ is defined to be the
set of all $f\in\cs'(\rn)$ satisfying that there exist a sequence
$\{\lz_i\}_{i\in\nn}\subset\mathbb{C}$ and a sequence of $(p(\cdot),r,s)$-atoms,
$\{a_i\}_{i\in\nn}$, supported, respectively, in
$\{B^{(i)}\}_{i\in\nn}\subset\B$ such that
\begin{align*}
f=\sum_{i\in\nn}\lz_ia_i
\quad\mathrm{in}\quad\cs'(\rn).
\end{align*}
Moreover, for any $f\in\vah$, let
\begin{align*}
\|f\|_{\vah}:=
{\inf}\lf\|\lf\{\sum_{i\in\nn}
\lf[\frac{|\lz_i|\mathbf{1}_{B^{(i)}}}{\|\mathbf{1}_{B^{(i)}}\|_{\lv}}\r]^
{\underline{p}}\r\}^{1/\underline{p}}\r\|_{\lv},
\end{align*}
where the infimum is taken over all decompositions of $f$ as above.
\end{enumerate}
\end{definition}

We also need the following atomic characterizations of $\vh$ established
in \cite[Theorem 4.8]{lwyy18}.

\begin{lemma}\label{3l3}
Let $p(\cdot)\in C^{\log}(\rn)$, $r\in(\max\{p_+,1\},\fz]$ with $p_+$
as in \eqref{2e3}, $s$ be as in \eqref{3e2} and
$N\in\mathbb{N}\cap[\lfloor(\frac1{\underline{p}}-1)\frac{\ln b}{\ln
\lambda_-}\rfloor+2,\fz)$ with $\underline{p}$ as in \eqref{2e3}.
Then $\vh=\vah$ with equivalent quasi-norms.
\end{lemma}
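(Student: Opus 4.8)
The plan is to prove $\vh=\vah$ with equivalent quasi-norms by establishing the two continuous inclusions $\vah\hookrightarrow\vh$ and $\vh\hookrightarrow\vah$ separately, following the Calder\'on--Zygmund machinery adapted to the anisotropic variable exponent setting. The main analytic inputs are the boundedness of the Hardy--Littlewood maximal operator on $L^{p(\cdot)/\underline{p}}(\rn)$ (equivalently, a Fefferman--Stein vector-valued inequality on $\lv$), which is available because $p(\cdot)\in C^{\log}(\rn)$, together with Lemma \ref{3l1} to absorb the dilations $A^i$ acting on dilated balls.

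\emph{Step 1: $\vah\subset\vh$.} It suffices to control $M_N(a)$ in $\lv$ for a single $(p(\cdot),r,s)$-atom $a$ supported in a dilated ball $B=x_0+B_{i_0}$, and then superpose. For $x\in A^{\tau}B$ one uses the $L^r(\rn)$-boundedness of $M_N$ (valid since $r>\max\{p_+,1\}$) and H\"older's inequality to get $\|M_N(a)\mathbf{1}_{A^{\tau}B}\|_{L^r(\rn)}\lesssim\|\mathbf{1}_B\|_{\lv}^{-1}|B|^{1/r}$; for $x\in U_j(B)$ with $j$ large one expands each testing function $\phi_i$ by Taylor's formula, invokes the vanishing moments of $a$ up to order $s$ (see Definition \ref{3d3}) together with the decay of Schwartz functions, obtaining the pointwise bound $M_N(a)(x)\lesssim b^{-j\gamma}\|\mathbf{1}_B\|_{\lv}^{-1}\mathbf{1}_{U_j(B)}(x)$ for some $\gamma>0$ depending on $s$, $\ln b$ and $\ln\lz_-$. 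Writing $f=\sum_{i\in\nn}\lz_ia_i$ and summing these estimates over the annuli, one passes from $\|M_N(f)\|_{\lv}$ to $\|\{\sum_{i\in\nn}[|\lz_i|\mathbf{1}_{B^{(i)}}/\|\mathbf{1}_{B^{(i)}}\|_{\lv}]^{\underline{p}}\}^{1/\underline{p}}\|_{\lv}$ by combining Lemma \ref{3l1} with the aforementioned maximal-function inequality on $\lv$; this gives $\vah\hookrightarrow\vh$.

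\emph{Step 2: $\vh\subset\vah$.} Given $f\in\vh$, for each $k\in\zz$ set $\Omega_k:=\{x\in\rn:\ M_N(f)(x)>2^k\}$, which is open and, since $M_N(f)\in\lv$, carries the right smallness as $k\to\fz$. Using an anisotropic Whitney covering of $\Omega_k$ by dilated balls of bounded overlap and a subordinate smooth partition of unity, one performs the Calder\'on--Zygmund decomposition $f=g_k+\sum_i b_i^k$ in $\cs'(\rn)$, where $g_k$ is, in a suitable sense, bounded by $\sim 2^k$ and each $b_i^k$ is supported in a fixed dilate of a Whitney ball $B_i^k$, has vanishing moments up to order $s$, and obeys $\|b_i^k\|_{L^r(\rn)}\lesssim 2^k|B_i^k|^{1/r}$; moreover $g_k\to f$ in $\cs'(\rn)$ as $k\to\fz$ and $g_k\to0$ in $\cs'(\rn)$ as $k\to-\fz$. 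Telescoping gives $f=\sum_{k\in\zz}(g_{k+1}-g_k)$ in $\cs'(\rn)$, and standard arguments further decompose $g_{k+1}-g_k$ into $(p(\cdot),r,s)$-atoms $a_i^k$ supported in dilates of $B_i^k$ with coefficients $\lz_i^k\sim 2^k\|\mathbf{1}_{B_i^k}\|_{\lv}$. It remains to bound the atomic quasi-norm: by the bounded overlap of the Whitney balls, $\sum_i\lz_i^k\mathbf{1}_{B_i^k}\lesssim 2^k\mathbf{1}_{\Omega_k}$, so
\[
\lf\|\lf\{\sum_{k,i}\lf[\frac{|\lz_i^k|\mathbf{1}_{B_i^k}}{\|\mathbf{1}_{B_i^k}\|_{\lv}}\r]^{\underline{p}}\r\}^{1/\underline{p}}\r\|_{\lv}
\lesssim\lf\|\lf\{\sum_{k\in\zz}\lf(2^k\mathbf{1}_{\Omega_k}\r)^{\underline{p}}\r\}^{1/\underline{p}}\r\|_{\lv},
\]
and the last quantity is $\lesssim\|M_N(f)\|_{\lv}=\|f\|_{\vh}$ by a routine level-set summation that is valid in $\lv$ thanks to $p(\cdot)\in C^{\log}(\rn)$. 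Hence $\vh\hookrightarrow\vah$.

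\emph{Main obstacle.} The delicate part is Step 2: constructing the Calder\'on--Zygmund decomposition at the level of tempered distributions in the anisotropic setting while keeping precise track of the moment conditions, the $L^r$-size estimates and the $\cs'(\rn)$-convergence of $g_k$, and---above all---carrying out the final summation of the atomic quasi-norms, where the failure of the triangle inequality for $\|\cdot\|_{\lv}$ forces one to route everything through the boundedness of the Hardy--Littlewood maximal operator on $L^{p(\cdot)/\underline{p}}(\rn)$ together with the log-H\"older estimates \eqref{2e4} and \eqref{2e5}. A secondary technical point is a density argument (e.g., $L^2(\rn)\cap\vh$ dense in $\vh$) needed to legitimize the termwise manipulations and the convergence of the atomic series in $\cs'(\rn)$.
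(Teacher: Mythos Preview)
The paper does not actually prove Lemma~\ref{3l3}: it is quoted verbatim as an already-established result, namely \cite[Theorem~4.8]{lwyy18}, and is used as a black box in the proof of Theorem~\ref{3t1}. So there is no ``paper's own proof'' to compare against.

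That said, your sketch is the right blueprint and is essentially the strategy carried out in \cite{lwyy18}. For Step~1 the paper's own argument for molecules (see \eqref{3e7}) is exactly the pointwise decay estimate you describe, recast as $M_N(a)(x)\lesssim\|\mathbf{1}_B\|_{\lv}^{-1}[\HL(\mathbf{1}_B)(x)]^{\beta}$ with $\beta>1/\underline{p}$, after which Lemmas~\ref{3l5} and~\ref{3l6} do the summation; your annular bound $b^{-j\gamma}$ is equivalent to this once one notes $\HL(\mathbf{1}_B)\sim b^{-j}$ on $U_j(B)$. For Step~2 the Calder\'on--Zygmund decomposition in $\cs'(\rn)$ with Whitney covers of $\Omega_k$, telescoping $f=\sum_k(g_{k+1}-g_k)$, and the final estimate via bounded overlap and the level-set identity $\sum_k 2^{k\underline{p}}\mathbf{1}_{\Omega_k}\lesssim[M_N(f)]^{\underline{p}}$ is precisely the route taken in \cite[Section~4]{lwyy18}. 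Two small caveats worth flagging in an actual write-up: first, the decomposition most naturally produces $(p(\cdot),\infty,s)$-atoms, from which $(p(\cdot),r,s)$-atoms follow trivially; second, the convergence $g_k\to f$ in $\cs'(\rn)$ as $k\to\infty$ and $g_k\to 0$ as $k\to-\infty$ requires the full force of the anisotropic Calder\'on--Zygmund lemma from \cite[Chapter~1, Section~6]{mb03}, adapted to the variable exponent setting, and is where the restriction $N\ge\lfloor(1/\underline{p}-1)\ln b/\ln\lz_-\rfloor+2$ is genuinely used.
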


The following two lemmas are, respectively, from \cite[Remark 2.1(i)]{yyyz16} and
\cite[p.\,8, (2.11), p.\,5, (2.1) and (2.2) and p.\,17, Proposition 3.10]{mb03}.

\begin{lemma}\label{3l5}
Let $p(\cdot)\in\cp(\rn)$. Then, for any $s\in (0,\fz)$ and $f\in\lv$,
$$\lf\||f|^s\r\|_{\lv}=\|f\|_{L^{sp(\cdot)}(\rn)}^s.$$
In addition, for any $\lz\in{\mathbb C}$ and $f,\ g\in\lv$,
$\|\lz f\|_{\lv}=|\lz|\|f\|_{\lv}$ and
$$\|f+g\|_{\lv}^{\underline{p}}\le \|f\|_{\lv}^{\underline{p}}
+\|g\|_{\lv}^{\underline{p}},$$
where $\underline{p}$ is as in \eqref{2e3}.
\end{lemma}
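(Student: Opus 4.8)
The plan is to reduce every assertion to a short computation with the modular $\varrho_{p(\cdot)}$ and the definition of the Luxemburg quasi-norm, proving the three claims in the stated order and using the scaling identity as a tool for the last one.

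First I would establish the scaling identity. Unwinding definitions,
$$\lf\||f|^s\r\|_{\lv}=\inf\lf\{\lz\in(0,\fz):\ \int_\rn\lf(\frac{|f(x)|^s}{\lz}\r)^{p(x)}\,dx\le1\r\}.$$
Since $s\in(0,\fz)$, the substitution $\lz=\mu^s$ is a bijection of $(0,\fz)$ onto itself, and $(|f(x)|^s/\mu^s)^{p(x)}=(|f(x)|/\mu)^{sp(x)}$, so the constraint becomes $\varrho_{sp(\cdot)}(|f|/\mu)\le1$, i.e. $\mu\ge\|f\|_{L^{sp(\cdot)}(\rn)}$; note $sp(\cdot)\in\cp(\rn)$ because $p(\cdot)\in\cp(\rn)$. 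As $t\mapsto t^s$ is increasing, the infimum of $\mu^s$ over this set equals $(\|f\|_{L^{sp(\cdot)}(\rn)})^s$, which is the claim (the case $\|f\|_{L^{sp(\cdot)}(\rn)}=\fz$ being trivial). The homogeneity $\|\lz f\|_{\lv}=|\lz|\,\|f\|_{\lv}$ is immediate for $\lz=0$ and, for $\lz\neq0$, follows by the analogous substitution $\mu\mapsto|\lz|\mu$ inside the infimum.

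For the $\underline{p}$-subadditivity I would argue directly on the modular. Fix $\var\in(0,\fz)$ and put $A:=\|f\|_{\lv}+\var$, $B:=\|g\|_{\lv}+\var$, $C:=(A^{\underline{p}}+B^{\underline{p}})^{1/\underline{p}}$, $\theta_1:=(A/C)^{\underline{p}}$ and $\theta_2:=(B/C)^{\underline{p}}$, so that $\theta_1,\theta_2\in(0,1)$ and $\theta_1+\theta_2=1$; since $A>\|f\|_{\lv}$ and $B>\|g\|_{\lv}$, monotonicity of $\varrho_{p(\cdot)}$ gives $\varrho_{p(\cdot)}(f/A)\le1$ and $\varrho_{p(\cdot)}(g/B)\le1$. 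The core step is the pointwise inequality, valid for almost every $x\in\rn$,
$$\lf|\frac{f(x)+g(x)}{C}\r|^{p(x)}\le\theta_1\lf|\frac{f(x)}{A}\r|^{p(x)}+\theta_2\lf|\frac{g(x)}{B}\r|^{p(x)}.$$
To see it, write $|f(x)|/C=\theta_1^{1/\underline{p}}(|f(x)|/A)$ and $|g(x)|/C=\theta_2^{1/\underline{p}}(|g(x)|/B)$ and split into two cases: when $p(x)\ge1$ one uses the convexity of $t\mapsto t^{p(x)}$ together with $\theta_1^{1/\underline{p}}+\theta_2^{1/\underline{p}}\le\theta_1+\theta_2=1$ (valid since $1/\underline{p}\ge1$ and $\theta_i\le1$), followed once more by $\theta_i^{1/\underline{p}}\le\theta_i$; when $p(x)<1$ one necessarily has $\underline{p}=p_-\le p(x)$, so the subadditivity $(a+b)^{p(x)}\le a^{p(x)}+b^{p(x)}$ applies, followed by $\theta_i^{p(x)/\underline{p}}\le\theta_i$. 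Integrating this inequality over $\rn$ yields $\varrho_{p(\cdot)}((f+g)/C)\le\theta_1+\theta_2=1$, whence $\|f+g\|_{\lv}\le C=(A^{\underline{p}}+B^{\underline{p}})^{1/\underline{p}}$; letting $\var\to0^+$ gives $\|f+g\|_{\lv}^{\underline{p}}\le\|f\|_{\lv}^{\underline{p}}+\|g\|_{\lv}^{\underline{p}}$.

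None of the three steps is genuinely deep; the only point requiring a little care is the pointwise inequality in the last step, namely forcing the two elementary cases $p(x)\ge1$ and $p(x)<1$ to produce the \emph{same} right-hand side with convex weights $\theta_1,\theta_2$. A cleaner but less self-contained route for the subadditivity is also available: by the scaling identity applied with $q(\cdot):=p(\cdot)/\underline{p}\in\cp(\rn)$ one has $\|f+g\|_{\lv}^{\underline{p}}=\big\||f+g|^{\underline{p}}\big\|_{L^{q(\cdot)}(\rn)}$, and since $|f+g|^{\underline{p}}\le|f|^{\underline{p}}+|g|^{\underline{p}}$ pointwise while $q_-\ge1$ makes $\|\cdot\|_{L^{q(\cdot)}(\rn)}$ an honest norm, monotonicity and its triangle inequality give $\|f+g\|_{\lv}^{\underline{p}}\le\big\||f|^{\underline{p}}\big\|_{L^{q(\cdot)}(\rn)}+\big\||g|^{\underline{p}}\big\|_{L^{q(\cdot)}(\rn)}=\|f\|_{\lv}^{\underline{p}}+\|g\|_{\lv}^{\underline{p}}$.
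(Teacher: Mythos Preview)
Your proof is correct. The paper does not actually prove this lemma; it merely cites \cite[Remark 2.1(i)]{yyyz16} for the result, so there is no argument in the paper to compare against. Your direct verification via the Luxemburg modular is exactly the standard route, and both of your arguments for the $\underline{p}$-subadditivity are valid: the first is fully self-contained, while the second (reducing to the genuine norm on $L^{p(\cdot)/\underline{p}}(\rn)$ via the scaling identity) is the cleaner one and is precisely how this fact is usually derived in the variable-exponent literature. One cosmetic remark: the hypothesis ``$f\in\lv$'' in the lemma statement is not really used in the scaling identity (both sides are allowed to be $+\infty$), and your proof correctly covers that case as well.
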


\begin{lemma}\label{3l4}
Let $A$ be some fixed dilation. Then
\begin{enumerate}
\item[{\rm (i)}] for any $i\in\zz$, we have
$$
B_i+B_i\subset B_{i+\tau}\quad and\quad
B_i+(B_{i+\tau})^\com\subset (B_i)^\com,
$$
where $\tau$ is as in \eqref{2e2};

\item[{\rm (ii)}] there exists a positive constant $C$ such that,
for any $x\in\rn$, when $k\in\zz_+$,
$$
C^{-1}\lf(\lambda_-\r)^k|x|\le\lf|A^kx\r|\le C\lf(\lambda_+\r)^k|x|
$$
and, when $k\in\zz\setminus\zz_+$,
$$
C^{-1}\lf(\lambda_+\r)^k|x|\le\lf|A^kx\r|\le C\lf(\lambda_-\r)^k|x|;
$$

\item[{\rm (iii)}] for any given $N\in\nn$, there exists a positive constant
$C_{(N)}$, depending on $N$, such that, for any $f\in\cs'(\rn)$ and $x\in\rn$,
$$M_N^0(f)(x)\le M_N(f)(x)
\le C_{(N)}M_N^0(f)(x),$$
where $M_N^0$ denotes the radial grand maximal function of $f\in\cs'(\rn)$
defined by setting, for any $x\in\rn$,
$$
M_N^0(f)(x):=\sup_{\phi\in\cs_N(\rn)}\sup_{i\in\zz}|f\ast\phi_i(x)|.
$$
\end{enumerate}
\end{lemma}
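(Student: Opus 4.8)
The plan is to establish the three assertions of Lemma \ref{3l4} separately, using only elementary properties of the dilated balls $B_i=A^i\Delta$ --- recall that $\Delta$ is an open ellipsoid, symmetric about $\vec0_n$, with $|\Delta|=1$ and $\Delta\subset r\Delta\subset A\Delta$ for some $r\in(1,\fz)$ --- together with standard linear algebra for the matrix $A$. For (i), I would first record by induction on $k$ that $r^k\Delta\subset A^k\Delta=B_k$ for every $k\in\zz_+$: the base case is $r\Delta\subset A\Delta$, and, since scalar multiplication commutes with $A$, the inductive step reads $r^{k+1}\Delta=r(r^k\Delta)\subset r(A^k\Delta)=A^k(r\Delta)\subset A^k(A\Delta)=A^{k+1}\Delta$. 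Because $\Delta$ is convex and symmetric about $\vec0_n$, one has $\Delta+\Delta=2\Delta$ and $s\Delta\subset t\Delta$ whenever $0<s\le t$; combining this with $r^\tau\ge2$ from \eqref{2e2} and the inclusion just proved yields $B_i+B_i=A^i(2\Delta)\subset A^i(r^\tau\Delta)\subset A^i(A^\tau\Delta)=B_{i+\tau}$, which is the first inclusion. For the second, suppose $x\in B_i$ and $x+y\in B_i$; using $-B_i=B_i$, we get $y=(x+y)+(-x)\in B_i+B_i\subset B_{i+\tau}$, so $y\notin(B_{i+\tau})^\com$. Taking contrapositives, $x\in B_i$ and $y\in(B_{i+\tau})^\com$ force $x+y\in(B_i)^\com$.

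For (ii), I would invoke the standard adapted-norm construction: there exists a norm $\|\cdot\|_\ast$ on $\rn$, equivalent to the Euclidean norm $|\cdot|$, whose induced operator norms obey $\|A\|_\ast\le\lambda_+$ and $\|A^{-1}\|_\ast\le(\lambda_-)^{-1}$. Such a norm exists because $\rho(A)=\max\{|\lambda|:\lambda\in\sigma(A)\}\le\lambda_+$ and $\rho(A^{-1})=1/\min\{|\lambda|:\lambda\in\sigma(A)\}\le(\lambda_-)^{-1}$: when $A$ is diagonalizable over $\cc$ one may take the norm adapted to a real Jordan basis, which realizes these bounds exactly for the sharp choice of $\lambda_-$ and $\lambda_+$, while in the general case one chooses $\lambda_-$ and $\lambda_+$ slightly away from the spectrum and uses a basis in which $A$ becomes triangular with arbitrarily small off-diagonal entries. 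Then, for $k\in\zz_+$, $\|A^kx\|_\ast\le\|A\|_\ast^k\|x\|_\ast\le(\lambda_+)^k\|x\|_\ast$ and $\|A^{-k}x\|_\ast\le(\lambda_-)^{-k}\|x\|_\ast$, and the equivalence $|\cdot|\sim\|\cdot\|_\ast$ turns these into $|A^kx|\ls(\lambda_+)^k|x|$ for $k\ge0$ and $|A^kx|\ls(\lambda_-)^k|x|$ for $k\in\zz\setminus\zz_+$. The two matching lower bounds then follow by applying the relevant upper bound to $A^{-k}(A^kx)$ and to $A^{k}(A^{-k}x)$, respectively.

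For (iii), the inequality $M_N^0(f)\le M_N(f)$ is immediate: since $\vec0_n\in B_i$ for every $i\in\zz$, we have $x\in x+B_i$, so the supremum defining $M_N^0(f)(x)$ is taken over a subset of that defining $M_N(f)(x)$. For the reverse inequality, fix $x\in\rn$, $\phi\in\cs_N(\rn)$, $i\in\zz$ and $y\in x+B_i$. Writing $y=x+A^iz$ with $z\in\Delta=B_0$ and changing variables in the convolution integral shows $f\ast\phi_i(y)=f\ast(\psi^{(z)})_i(x)$, where $\psi^{(z)}(\cdot):=\phi(\cdot+z)$. The crux is the uniform bound $\|\psi^{(z)}\|_{\cs_N(\rn)}\le C$, valid for all $z\in\Delta$ and all $\phi\in\cs_N(\rn)$: this holds because $\rho$ is bounded on $\Delta$ (indeed $\rho(z)\le b^{-1}$ whenever $z\in B_0$), so the quasi-triangle inequality for $\rho$ gives $\max\{1,[\rho(u)]^N\}\le C\max\{1,[\rho(u+z)]^N\}$ uniformly in $z\in\Delta$, and the substitution $v=u+z$ then reduces $\|\psi^{(z)}\|_{\cs_N(\rn)}$ to a constant multiple of $\|\phi\|_{\cs_N(\rn)}\le1$. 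Hence $\psi^{(z)}/C\in\cs_N(\rn)$, so $|f\ast\phi_i(y)|=C|f\ast(\psi^{(z)}/C)_i(x)|\le CM_N^0(f)(x)$, and taking the supremum over $y\in x+B_i$, $i\in\zz$ and $\phi\in\cs_N(\rn)$ gives $M_N(f)(x)\le C_{(N)}M_N^0(f)(x)$.

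The main obstacle is precisely this uniform seminorm estimate in (iii): absorbing the shift $z$ into the polynomial weight $\max\{1,[\rho(\cdot)]^N\}$ at the cost of only a multiplicative constant is what makes the argument run, and it relies on carefully exploiting that $\Delta$ is bounded (so that $\rho$ stays bounded on it) together with the quasi-triangle inequality of the step homogeneous quasi-norm $\rho$; all the remaining steps are routine bookkeeping.
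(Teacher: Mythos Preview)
Your proof is correct in all three parts. The paper itself does not supply a proof of this lemma: it simply quotes the three assertions from Bownik's memoir \cite[p.\,8, (2.11); p.\,5, (2.1)--(2.2); p.\,17, Proposition~3.10]{mb03}. So there is nothing to compare against beyond noting that your arguments reproduce the standard ones from that reference.

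A few minor remarks. In (i) your use of convexity to get $\Delta+\Delta=2\Delta$ is the clean way to do it, and the contrapositive for the second inclusion is exactly right. In (ii) the adapted-norm construction you invoke is precisely what underlies \cite[p.\,5, (2.1)--(2.2)]{mb03}. In (iii) your change of variables $f\ast\phi_i(y)=f\ast(\psi^{(z)})_i(x)$ is valid distributionally (not just for integrable $f$), since $\phi_i(y-\cdot)=(\psi^{(z)})_i(x-\cdot)$ as Schwartz functions; you might add a word to that effect. Your estimate $\rho(z)\le b^{-1}$ for $z\in B_0$ and the ensuing bound $\max\{1,[\rho(u)]^N\}\ls\max\{1,[\rho(u+z)]^N\}$ via the quasi-triangle inequality are both fine and give the uniform control of $\|\psi^{(z)}\|_{\cs_N(\rn)}$ that the argument needs.
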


Let $L_{\rm loc}^1(\rn)$ be the collection of all locally integrable functions
on $\rn$. Recall that, for any $f\in L^1_{{\rm loc}}(\rn)$, the \emph{anisotropic
Hardy--Littlewood maximal function} $M_{{\rm HL}}(f)$ is defined by setting,
for any $x\in\rn$,
\begin{align}\label{3e1}
M_{{\rm HL}}(f)(x):=\sup_{k\in\zz}\sup_{y\in x+B_k}\frac1{|B_k|}
\int_{y+B_k}|f(z)|\,dz=\sup_{x\in B\in\B}\frac1{|B|}\int_B|f(z)|\,dz,
\end{align}
where $\B$ is as in \eqref{2e1}.

The following Fefferman--Stein vector-valued inequality of the maximal
operator $\HL$ on the variable Lebesgue space $\lv$ is just \cite[Lemma 4.3]{lyy17hl}.

\begin{lemma}\label{3l6}
Let $\nu\in(1,\fz]$.
Assume that $p(\cdot)\in C^{\log}(\rn)$ satisfies $1<p_-\le p_+<\fz$.
Then there exists a positive constant $C$ such that, for any sequence
$\{f_k\}_{k\in\nn}$ of measurable functions,
$$\lf\|\lf\{\sum_{k\in\nn}
\lf[\HL(f_k)\r]^\nu\r\}^{1/\nu}\r\|_{\lv}
\le C\lf\|\lf(\sum_{k\in\nn}|f_k|^\nu\r)^{1/\nu}\r\|_{\lv}$$
with the usual modification made when $\nu=\fz$,
where $\HL$ denotes the Hardy--Littlewood maximal operator as in \eqref{3e1}.
\end{lemma}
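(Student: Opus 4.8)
\emph{Proof proposal for Lemma \ref{3l6}.}
The plan is to reduce the statement to classical weighted norm inequalities and then pass to the variable exponent through Rubio de Francia extrapolation, after first recording that the family of dilated balls $\B$ turns $(\rn,\rho,dx)$ into a space of homogeneous type. Indeed, by Lemma \ref{3l4}(ii) together with $|B_i|=b^i$ and $rB_i\subset B_{i+1}$, the Lebesgue measure is doubling with respect to the quasi-balls determined by $\rho$, and the operator $\HL$ in \eqref{3e1} is (equivalent to) the uncentered Hardy--Littlewood maximal operator for this structure; hence the full apparatus of Muckenhoupt $A_q$ classes ($q\in(1,\fz)$, defined through the balls in $\B$) and of weighted maximal inequalities is at our disposal. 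I would first dispose of the endpoint $\nu=\fz$: for any $x\in\rn$, any $B\in\B$ with $x\in B$, and any $k\in\nn$ one has $\frac1{|B|}\int_B|f_k|\le\frac1{|B|}\int_B\sup_{j\in\nn}|f_j|$, so taking the supremum over such $B$ and then over $k$ gives the pointwise bound $\sup_{k\in\nn}\HL(f_k)\le\HL(\sup_{j\in\nn}|f_j|)$. Thus the case $\nu=\fz$ follows at once from the scalar inequality $\|\HL(g)\|_{\lv}\le C\|g\|_{\lv}$, which holds because $p(\cdot)\in C^{\log}(\rn)$ and $1<p_-\le p_+<\fz$; this scalar bound is the anisotropic analogue of the Cruz-Uribe--Fiorenza--Neugebauer theorem and may itself be obtained, e.g., from the $A_q$-weighted boundedness of $\HL$ by the variable-exponent extrapolation theorem.

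For $\nu\in(1,\fz)$ I would argue by extrapolation. Consider the family $\mathcal F$ of all pairs $(\HL f,f)$ with $f\in\lv\cap L^1_{\loc}(\rn)$. For every $q\in(1,\fz)$ and every weight $w\in A_q$ (on the space of homogeneous type above), the classical weighted maximal inequality gives $\|\HL f\|_{L^q(w)}\le C_{[w]_{A_q}}\|f\|_{L^q(w)}$ for all $(\HL f,f)\in\mathcal F$. By the vector-valued Rubio de Francia extrapolation theorem, this self-improves: for every $\nu\in(1,\fz)$, every $q\in(1,\fz)$, every $w\in A_q$, and every sequence $\{f_k\}_{k\in\nn}$,
$$\lf\|\lf(\sum_{k\in\nn}(\HL f_k)^\nu\r)^{1/\nu}\r\|_{L^q(w)}\le C\lf\|\lf(\sum_{k\in\nn}|f_k|^\nu\r)^{1/\nu}\r\|_{L^q(w)},$$
with $C$ depending only on $\nu$, $q$ and $[w]_{A_q}$. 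Finally, since $p(\cdot)\in C^{\log}(\rn)$ with $1<p_-\le p_+<\fz$, the variable-exponent extrapolation theorem transfers this $A_q$-weighted $L^q$ estimate into the $\lv$ estimate
$$\lf\|\lf(\sum_{k\in\nn}(\HL f_k)^\nu\r)^{1/\nu}\r\|_{\lv}\le C\lf\|\lf(\sum_{k\in\nn}|f_k|^\nu\r)^{1/\nu}\r\|_{\lv},$$
which, combined with the $\nu=\fz$ case above, proves the lemma.

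The routine part is everything after the abstract tools are in place; the point that will need genuine care is the interface between the anisotropic structure and the abstract weighted/extrapolation machinery — namely checking that $(\rn,\rho,dx)$ is a bona fide space of homogeneous type, that $\HL$ is the maximal operator attached to it, that the $A_q$ classes built from $\B$ support the classical weighted maximal inequality with constants controlled by $[w]_{A_q}$, and, most delicately, that the log-H\"older hypotheses \eqref{2e4}--\eqref{2e5} phrased through $\rho$ are exactly what the variable-exponent extrapolation theorem requires in this quasi-metric setting. An alternative, weight-free route, following Diening, would use H\"older's inequality for $\lv$, the scalar boundedness of $\HL$ on both $\lv$ and its associate space $L^{p'(\cdot)}(\rn)$, and a duality/linearization of the $\ell^\nu$-norm; this avoids extrapolation but is technically heavier, and the extrapolation argument sketched above is the one underlying \cite[Lemma 4.3]{lyy17hl}.
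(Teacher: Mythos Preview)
The paper does not supply a proof of this lemma at all: it simply quotes the result as \cite[Lemma~4.3]{lyy17hl} and moves on. So there is no ``paper's own proof'' to compare against; your proposal is a self-contained argument where the paper offers only a citation.

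That said, your sketch is a correct and standard route to the result, and you yourself note in the last line that it is the strategy behind \cite[Lemma~4.3]{lyy17hl}. The reduction of $\nu=\infty$ to the scalar bound via the pointwise inequality $\sup_k\HL(f_k)\le\HL(\sup_k|f_k|)$ is fine, and for $\nu\in(1,\infty)$ the two-step extrapolation (weighted vector-valued Fefferman--Stein from the scalar $A_q$ bound, then variable-exponent extrapolation to $\lv$) is exactly how such inequalities are obtained in this setting. Your own caveats are the right ones: one must verify that $(\rn,\rho,dx)$ is a space of homogeneous type (this is in \cite{mb03}), that the anisotropic $A_q$ theory supports the needed weighted maximal inequality with dependence only on $[w]_{A_q}$, and that the variable-exponent extrapolation theorem is available with the log-H\"older condition phrased through $\rho$. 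These verifications are carried out in the literature surrounding \cite{lyy17hl}, so the interface issues you flag are genuine but already settled.
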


Now we state the main result of this section as follows.

\begin{theorem}\label{3t1}
Let $p(\cdot)$, $r$, $s$ and $N$ be as in Lemma \ref{3l3} and
$\var\in((s+1)\log_b{(\lz_+/\lz_-)},\fz)$.
Then $\vh=\vmh$ with equivalent quasi-norms.
\end{theorem}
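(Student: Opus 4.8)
The plan is to prove $\vh=\vmh$ with equivalent quasi-norms by establishing the two inclusions separately. For the easy inclusion $\vh\subset\vmh$, I would argue as the author indicates in the introduction: by the atomic characterization in Lemma \ref{3l3}, $\vh=\vah$ with equivalent quasi-norms, and every $(p(\cdot),r,s)$-atom supported in a dilated ball $B$ is trivially a $(p(\cdot),r,s,\var)$-molecule associated to $B$ (the integral size condition (i) of Definition \ref{3d1} holds with $U_0(B)=B$ by the atom's size bound (i)$_2$, and holds vacuously on $U_j(B)$ for $j\in\nn$ since the atom vanishes there, so that $\|m\|_{L^r(U_j(B))}=0\le b^{-j\var}|B|^{1/r}\|\mathbf 1_B\|_{\lv}^{-1}$; the moment condition (ii) is exactly (i)$_3$). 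Hence any atomic decomposition is a molecular decomposition with the same coefficients, giving $\|f\|_{\vmh}\ls\|f\|_{\vah}\sim\|f\|_{\vh}$.

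The substance of the theorem is the reverse inclusion $\vmh\subset\vh$ with $\|f\|_{\vh}\ls\|f\|_{\vmh}$. Fix $f=\sum_{i\in\nn}\lz_im_i$ in $\cs'(\rn)$, with each $m_i$ a $(p(\cdot),r,s,\var)$-molecule associated to $B^{(i)}=x_i+B_{k_i}\in\B$. By Lemma \ref{3l4}(iii) it suffices to control $\|M_N^0(f)\|_{\lv}$ (equivalently $\|M_N(f)\|_{\lv}$). Using the $\underline p$-subadditivity of $\|\cdot\|_{\lv}$ on $\underline p$-th powers (Lemma \ref{3l5}), I would reduce to proving a uniform estimate for a single molecule, namely that, after splitting $M_N(m_i)$ according to the annuli $U_j(B^{(i)})$ of the dilated ball together with a suitably expanded ball $\widetilde B^{(i)}:=x_i+B_{k_i+\sigma}$ for a large fixed integer $\sigma$ depending on $\tau$ and $s$, one has pointwise control of the form
\begin{align*}
M_N(m_i)\mathbf 1_{(\widetilde B^{(i)})^\com}
\ls\sum_{j\ge \sigma} b^{-j\var'}\,\big[\HL\big(\mathbf 1_{U_j(B^{(i)})}\big)\big]^{\theta}
\,\|\mathbf 1_{B^{(i)}}\|_{\lv}^{-1}
\end{align*}
for some $\var'>0$ and $\theta\in(0,1)$ close to $1$; the part on $\widetilde B^{(i)}$ itself is handled by boundedness of $M_N$ on $L^r$ together with Lemma \ref{3l2}. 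The estimate on the far annuli is where the moment condition (ii) of Definition \ref{3d1} enters: for $x\notin\widetilde B^{(i)}$ and $\phi\in\cs_N(\rn)$, one subtracts the order-$s$ Taylor polynomial of $\phi_\ell$ at the center $x_i$ from $\phi_\ell(y)-$, then distributes the integral $m_i\ast\phi_\ell(y)=\int m_i(z)[\phi_\ell(y-z)-P_{x_i}^{s}\phi_\ell(y-z)]\,dz$ over the annuli $U_{j}(B^{(i)})$, estimates the Taylor remainder using $|\partial^\gz\phi_\ell|$ bounds, the anisotropic mean value/homogeneity estimates in Lemma \ref{3l4}(ii), and Hölder's inequality with the $L^r$-size bound (i); summing the resulting geometric-type series in $j$ and $\ell$ produces the displayed decay, the hypothesis $\var>(s+1)\log_b(\lz_+/\lz_-)$ being exactly what is needed to beat the growth coming from the crude anisotropic comparison between $\lz_-$ and $\lz_+$ in the remainder estimate so that $\var'>0$.

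Finally I would assemble the pieces: writing $M_N(f)\le\big(\sum_i|\lz_i|^{\underline p}[M_N(m_i)]^{\underline p}\big)^{1/\underline p}$ by sublinearity of $M_N$ and $\underline p\le1$, splitting each $M_N(m_i)$ into its near part (on $\widetilde B^{(i)}$) and far part (the series above), and then applying the $\lv$-quasi-norm: for the near parts invoke Lemma \ref{3l2} with $r(\cdot)=p(\cdot)$ and $q=r$ (legitimate since $r>\max\{p_+,1\}\ge p_+$), reducing the $\lv$-estimate of $\{M_N(m_i)\mathbf 1_{\widetilde B^{(i)}}\}$ to the coefficient quantity defining $\|f\|_{\vmh}$; for the far parts, use Lemma \ref{3l5} to pull out $\theta$-th powers, then the Fefferman--Stein inequality of Lemma \ref{3l6} (applicable to $L^{p(\cdot)/\theta}$ since $p_-/\theta>1$ for $\theta$ chosen close enough to $1$, using $p_->0$... here one actually uses $\underline p$ and a standard $r_0<\underline p$ trick via Lemma \ref{3l1}/Lemma \ref{3l6} to handle $p_-\le 1$) to replace $\HL(\mathbf 1_{U_j(B^{(i)})})$ by $\mathbf 1_{A^jB^{(i)}}$ up to a constant, then Lemma \ref{3l1} to absorb the dilation factor $b^{j/r_0}$ at the cost of $b^{j/r_0}$, and finally sum the series in $j$ using that $\var'$ can be arranged larger than $1/r_0$. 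The output is $\ls\|f\|_{\vmh}$, completing the proof. I expect the main obstacle to be precisely the pointwise far-annuli estimate on $M_N(m_i)$ using only the integral size condition (i) rather than a pointwise size bound — this is the novelty over \cite[Theorem 5.2]{ns12} — and in particular bookkeeping the double sum over the annulus index $j$ and the scale index $\ell$ so that the $\var>(s+1)\log_b(\lz_+/\lz_-)$ threshold comes out sharp; the subsequent $\lv$-norm manipulations are routine given Lemmas \ref{3l1}, \ref{3l2}, \ref{3l5} and \ref{3l6}.
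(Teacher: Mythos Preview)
Your overall architecture matches the paper's: the easy inclusion via ``atoms are molecules'' is exactly right, and for the hard inclusion you correctly split $M_N(m_i)$ into a near part handled by $L^r$-boundedness of $M_N$ together with Lemma~\ref{3l2}, and a far part handled via the vanishing moments and a Taylor-remainder estimate.

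The gap is in your formulation of the far-part pointwise bound. You propose
\[
M_N(m_i)\mathbf 1_{(\widetilde B^{(i)})^\com}
\ls\sum_{j\ge\sigma}b^{-j\var'}\bigl[\HL(\mathbf 1_{U_j(B^{(i)})})\bigr]^{\theta}\,
\|\mathbf 1_{B^{(i)}}\|_{\lv}^{-1}
\]
with $\theta\in(0,1)$, and then plan to push this through Fefferman--Stein on $L^{p(\cdot)/\theta}$. But for Lemma~\ref{3l6} you need $(p/\theta)_->1$, i.e.\ $\theta<p_-$; when $p_-\le1$ this forces $\theta<1$, yet the decay you can actually extract from the Taylor remainder yields an exponent $\beta=1+(s+1)\log_b\lz_->1/\underline p$, not an exponent below $1$. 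Your ``$r_0<\underline p$ trick'' does not repair this: Lemma~\ref{3l1} only controls indicator functions of dilated balls, not $\HL$ applied to annuli, and there is no mechanism to upgrade a $\theta<1$ power of $\HL$ to something summable in $\lv$ when $\underline p<1$.

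The paper avoids this entirely by observing that after the Taylor subtraction and the annulus decomposition, the contribution from $U_j$ at a point $x$ in the $k$-th exterior annulus \emph{factorizes} as (a function of $j$)$\times$(a function of $k$); the $j$-sum is then a convergent geometric series precisely because $\var>(s+1)\log_b(\lz_+/\lz_-)$, leaving the clean bound
\[
M_N(m_i)(x)\ls\|\mathbf 1_{B^{(i)}}\|_{\lv}^{-1}\bigl[\HL(\mathbf 1_{B^{(i)}})(x)\bigr]^{\beta},
\qquad
\beta=1+(s+1)\frac{\ln\lz_-}{\ln b}>\frac1{\underline p}.
\]
With this single-term bound, Lemma~\ref{3l5} converts the $\lv$-norm to an $L^{\beta p(\cdot)}$-norm with $(\beta p)_->1$, and Lemma~\ref{3l6} applies directly---no extra sum in $j$, no Lemma~\ref{3l1}, no auxiliary $r_0$. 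So the fix is: carry the $j$-sum through the Taylor-remainder estimate \emph{before} passing to $\lv$, and recognize that the resulting exponent on $\HL(\mathbf 1_{B^{(i)}})$ automatically exceeds $1/\underline p$ by the choice of $s$.
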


\begin{proof}
Let $p(\cdot)$, $r$ and $s$ be as in Lemma \ref{3l3}.
Then, by Lemma \ref{3l3}, we know that $\vh=\vah$ with equivalent quasi-norms.
Moreover, by the definitions of both $\vah$ and $\vmh$, we find that
$\vah\subset\vmh$ and this inclusion is continuous. Thus, $\vh\subset\vmh$
with continuous inclusion.

Conversely, for any $f\in\vmh$, without loss of generality, we can assume that $f$
is not the zero element of $\vmh$. Then, from Definition \ref{3d2}, it follows that
there exist a sequence $\{\lz_i\}_{i\in\nn}\subset\mathbb{C}$ and a sequence of
$(p(\cdot),r,s,\var)$-molecules, $\{m_i\}_{i\in\nn}$, associated, respectively, to
$\{B^{(i)}\}_{i\in\nn}\subset\B$ such that
\begin{align}\label{3e3}
f=\sum_{i\in\nn}\lz_im_i\quad {\rm in}\quad \cs'(\rn),
\end{align}
and
\begin{align*}
\|f\|_{\vmh}\sim\lf\|\lf\{\sum_{i\in\nn}
\lf[\frac{|\lz_i|\mathbf{1}_{B^{(i)}}}{\|\mathbf{1}_{B^{(i)}}\|_{\lv}}\r]^
{\underline{p}}\r\}^{1/\underline{p}}\r\|_{\lv},
\end{align*}
where $\underline{p}$ is as in \eqref{2e3}. Obviously, there exist two sequences
$\{x_i\}_{i\in\nn}\subset\rn$ and $\{\ell_i\}_{i\in\nn}\subset\zz$ such that,
for any $i\in\nn$, $x_i+B_{\ell_i}=B^{(i)}$. By \eqref{3e3}, it is easy to see that,
for any $N\in\nn\cap[\lfloor(\frac1{\underline{p}}-1)\frac{\ln b}{\ln
\lambda_-}\rfloor+2,\fz)$ and $x\in\rn$,
\begin{align}\label{3e4}
M_N(f)(x)\le\sum_{i\in\nn}|\lz_i|M_N(m_i)(x)\mathbf{1}_{x_i+A^\tau B_{\ell_i}}(x)
+\sum_{i\in\nn}|\lz_i|M_N(m_i)(x)\mathbf{1}_{(x_i+A^\tau B_{\ell_i})^\com}(x)
=:{\rm I}_1+{\rm I}_2.
\end{align}

To deal with ${\rm I}_1$, for any $r\in(\max\{p_+,1\},\fz]$, $\var$ as in Theorem \ref{3t1}
and $i\in\nn$, from the boundedness of $M_N$ on $L^r(\rn)$ (see \cite[Remark 2.10]{lyy16})
and Definition \ref{3d1}(i), we deduce that
\begin{align*}
\|M_N(m_i)\|_{L^r(\rn)}
&\ls\|m_i\|_{L^r(\rn)}\ls\sum_{j\in\zz_+}\|m_i\|_{L^r(U_j(B^{(i)}))}\\
&\ls\sum_{j\in\zz_+}b^{-j\var}\frac{|B^{(i)}|^{1/r}}{\|\mathbf{1}_{B^{(i)}}\|_{\lv}}
\sim\frac{|B^{(i)}|^{1/r}}{\|\mathbf{1}_{B^{(i)}}\|_{\lv}},
\end{align*}
where $U_0(B^{(i)}):=B^{(i)}$ and, for any $j\in\nn$,
$U_j(B^{(i)})=U_j(x_i+B_{\ell_i}):=x_i+(A^j B_{\ell_i})\setminus(A^{j-1}B_{\ell_i})$.
This, combined with Lemma \ref{3l2}, implies that
\begin{align}\label{3e5}
\|{\rm I}_1\|_{\lv}
&\ls\lf\|\lf\{\sum_{i\in\nn}\lf[|\lz_i|M_N(m_i)\mathbf{1}_{A^\tau B^{(i)}}\r]
^{\underline{p}}\r\}^{1/\underline{p}}\r\|_{\lv}\\
&\ls\lf\|\lf\{\sum_{i\in\nn}
\lf[\frac{|\lz_i|\mathbf{1}_{B^{(i)}}}{\|\mathbf{1}_{B^{(i)}}\|_{\lv}}\r]^
{\underline{p}}\r\}^{1/\underline{p}}\r\|_{\lv}\sim\|f\|_{\vmh}.\noz
\end{align}

For the term ${\rm I}_2$, suppose that $P$ is a polynomial of degree not greater than
$s$. Then, by Definition \ref{3d1} and the H\"{o}lder inequality, we find that, for any
$N\in\nn$, $\phi\in\cs_N(\rn)$, $t\in\zz$ and $x\in (x_i+B_{\ell_i+\tau})^\com$ with $i\in\nn$,
\begin{align*}
\lf|(m_i\ast\phi_t)(x)\r|
&=b^{-t}\lf|\int_\rn m_i(y)\phi\lf(A^{-t}(x-y)\r)\,dy\r|\\
&\le b^{-t}\sum_{j\in\zz_+}\lf|\int_{U_j(x_i+B_{\ell_i})}m_i(y)
\lf[\phi\lf(A^{-t}(x-y)\r)-P\lf(A^{-t}(x-y)\r)\r]\,dy\r|\noz\\
&\le b^{-t}\sum_{j\in\zz_+}\sup_{y\in A^{-t}(x-x_i)+A^jB_{\ell_i-t}}
|\phi(y)-P(y)|\int_{U_j(x_i+B_{\ell_i})}|m_i(y)|\,dy\noz\\
&\ls b^{\ell_i/r'-t}\sum_{j\in\zz_+}b^{j/r'}\sup_{y\in A^{-t}(x-x_i)+A^jB_{\ell_i-t}}
|\phi(y)-P(y)|\|m_i\|_{L^r(U_j(x_i+B_{\ell_i}))}\noz\\
&\ls b^{\ell_i-t}\lf\|\mathbf{1}_{x_i+B_{\ell_i}}\r\|_{\lv}^{-1}\sum_{j\in\zz_+}
b^{(1/r'-\var)j}\sup_{y\in A^{-t}(x-x_i)+A^jB_{\ell_i-t}}|\phi(y)-P(y)|.\noz
\end{align*}
Assume that $x\in [x_i+(B_{\ell_i+\tau+k+1}\setminus B_{\ell_i+\tau+k})]$
for some $k\in\zz_+$. Without loss of generality, we can assume that
$s=\lfloor(1/\underline{p}-1)\ln b/\ln\lz_-\rfloor$ and $N=s+2$, which implies that
$b\lz_-^{s+1}\le b^N$. Then, by Lemma \ref{3l4}, an estimation similar to that
used in the proof of \cite[(3.9)]{lhy18} and the fact that
$\var\in((s+1)\log_b(\lz_+/\lz_-),\fz)$, we conclude that, for any $i\in\nn$,
\begin{align}\label{3e7}
M_N(m_i)(x)
&\ls\lf\|\mathbf{1}_{B^{(i)}}\r\|_{\lv}^{-1}\sum_{j\in\zz_+}
\lf\{b^{-j\var}+b^{-j[\var-(s+1)\log_b(\lz_+/\lz_-)]}\r\}
\max\lf\{b^{-Nk},\,\lf(b\lz_-^{s+1}\r)^{-k}\r\}\\
&\ls\lf\|\mathbf{1}_{B^{(i)}}\r\|_{\lv}^{-1}\lf(b\lz_-^{s+1}\r)^{-k}
\sim\lf\|\mathbf{1}_{B^{(i)}}\r\|_{\lv}^{-1}
b^{-k}b^{-(s+1)k\frac{\ln\lambda_-}{\ln b}}\noz\\
&\ls\lf\|\mathbf{1}_{B^{(i)}}\r\|_{\lv}^{-1}
b^{\ell_i[(s+1)\frac{\ln\lambda_-}{\ln b}+1]}
b^{-(\ell_i+\tau+k)[(s+1)\frac{\ln\lambda_-}{\ln b}+1]}\noz\\
&\ls\lf\|\mathbf{1}_{B^{(i)}}\r\|_{\lv}^{-1}
\frac{|B^{(i)}|^\beta}{[\rho(x-x_i)]^\beta}
\ls\lf\|\mathbf{1}_{B^{(i)}}\r\|_{\lv}^{-1}
\lf[\HL\lf(\mathbf{1}_{B^{(i)}}\r)(x)\r]^\bz,\noz
\end{align}
where
\begin{align*}
\beta:=\lf(\frac{\ln b}{\ln \lambda_-}+s+1\r)
\frac{\ln \lambda_-}{\ln b}>\frac1{\underline{p}}.
\end{align*}
From this and Lemmas \ref{3l5} and \ref{3l6}, it follows that
\begin{align*}
\|{\rm I}_2\|_{\lv}
&\ls\lf\|\sum_{i\in\nn}\frac{|\lz_i|}{\|\mathbf{1}_{B^{(i)}}\|_{\lv}}
\lf[\HL\lf(\mathbf{1}_{B^{(i)}}\r)\r]^\bz\r\|_{\lv}\\
&\sim\lf\|\lf\{\sum_{i\in\nn}\frac{|\lz_i|}{\|\mathbf{1}_{B^{(i)}}\|_{\lv}}
\lf[\HL(\mathbf{1}_{B^{(i)}})\r]^\bz\r\}^{1/\bz}\r\|_{L^{\bz p(\cdot)}}^{\bz}\\
&\ls\lf\|\sum_{i\in\nn}
\frac{|\lz_i|\mathbf{1}_{B^{(i)}}}{\|\mathbf{1}_{B^{(i)}}\|_{\lv}}\r\|_{\lv}
\ls\lf\|\lf\{\sum_{i\in\nn}
\lf[\frac{|\lz_i|\mathbf{1}_{B^{(i)}}}{\|\mathbf{1}_{B^{(i)}}\|_{\lv}}\r]^
{\underline{p}}\r\}^{1/\underline{p}}\r\|_{\lv}\\
&\sim\|f\|_{\vmh}.
\end{align*}
This, together with \eqref{3e4}, \eqref{3e5} and Lemma \ref{3l5} again, implies that
$$\|f\|_{\vh}=\|M_N(f)\|_{\lv}\ls\|f\|_{\vmh},$$
which completes the proof of Theorem \ref{3t1}.
\end{proof}

\begin{remark}\label{3r1}
\begin{enumerate}
\item[(i)] When $A:=d\,{\rm I}_{n\times n}$ for some $d\in\rr$ with $|d|\in(1,\fz)$,
here and thereafter, ${\rm I}_{n\times n}$ denotes the $n\times n$ \emph{unit matrix},
$\vh$ and $\vmh$ become, respectively, the classical isotropic variable Hardy
space (see \cite{cw14,ns12}) and variable molecular Hardy space. In this case,
Theorem \ref{3t1} includes \cite[Theorem 5.2]{ns12} as a special case.
\item[(ii)] Recall that, in \cite[Theorem 3.12]{lhy18}, the authors established
the molecular characterizations of the anisotropic Musielak--Orlicz Hardy space
$H_A^\varphi(\rn)$ with $\varphi:\ \rn\times[0,\fz)\to[0,\fz)$ being an anisotropic
growth function (see \cite[Definition 3]{lyy14}). By \cite[Remark 2.5(iii)]{lwyy18},
we know that the anisotropic Musielak--Orlicz Hardy space $\hvz$ and the variable
anisotropic Hardy space $\vh$ in this article can not cover each other, and hence
neither do \cite[Theorem 3.12]{lhy18} and Theorem \ref{3t1}.
\item[(iii)] Very recently, in \cite[Theorem 2.8]{lql}, Liu et al. obtained
the molecular characterizations of variable anisotropic Hardy--Lorentz spaces
$\vhlpq$ with $p(\cdot)\in C^{\log}(\rn)$ and $q\in(0,\fz]$. It is easy to see
that the space $\vh$, in this article, is not covered by the space $\vhlpq$
since the exponent $q\in(0,\fz]$ in $\vhlpq$ is only a constant. Thus,
Theorem \ref{3t1} is neither covered by \cite[Theorem 2.8]{lql}.
\item[(iv)] We should also point out that \cite[Theorem 3.12]{lhy18} and
\cite[Theorem 2.8]{lql}, respectively, require the decay index $\var$ to belong to
$$\lf(\max\lf\{1,\,(s+1)\log_b{(\lz_+/\lz_-)}\r\},\fz\r)\quad{\rm and}\quad
\lf(\max\lf\{1,\,(s+1)\log_b{(\lz_+)}\r\},\fz\r),$$
either of which is just a proper subset of
$$((s+1)\log_b{(\lz_+/\lz_-)},\fz)$$
from Theorem \ref{3t1}. In this sense, we improve the range of $\var$ in the
molecular characterizations. In particular, when $A$ is as in (i) of this remark
and $p(\cdot)\equiv p\in(0,\fz)$, the space $\vh$ becomes the classical isotropic
Hardy space $H^p(\rn)$ and $\log_b{(\lz_+/\lz_-)}=0$. In this case, Theorem \ref{3t1}
gives a molecular characterization of $H^p(\rn)$ with the known best possible decay
of molecules, namely, $\var\in(0,\fz)$.
\end{enumerate}
\end{remark}

\section{Some applications\label{s4}}

In this section, as applications, we first establish a criterion on the boundedness
of linear operators on $\vh$. Then, applying this criterion, we obtain the boundedness of
anisotropic Calder\'on--Zygmund operators on $\vh$. In addition, the boundedness
of these operators from $\vh$ to the variable Lebesgue space $\lv$ is also presented.

First, we recall the notion of
variable anisotropic finite atomic Hardy spaces $\vfah$ from \cite[Definition 5.1]{lwyy18}.

\begin{definition}\label{4d1}
Let $p(\cdot)\in C^{\log}(\rn)$, $r\in(1,\fz]$, $s$ be as in \eqref{3e2} and $A$ a dilation.
The \emph{variable anisotropic finite atomic Hardy space} $\vfah$ is defined to be the set
of all $f\in\cs'(\rn)$ satisfying that there exist an $I\in\nn$, a finite sequence
$\{\lz_i\}_{i\in[1,I]\cap\nn}\subset\cc$ and a finite sequence of $(p(\cdot),r,s)$-atoms,
$\{a_i\}_{i\in[1,I]\cap\nn}$, supported, respectively, in
$\{B^{(i)}\}_{i\in[1,I]\cap\nn}\subset\mathfrak{B}$ such that
\begin{align*}
f=\sum_{i=1}^I\lambda_ia_i
\quad\mathrm{in}\quad\cs'(\rn).
\end{align*}
Moreover, for any $f\in\vfah$, let
\begin{align*}
\|f\|_{\vfah}:=
{\inf}\lf\|\lf\{\sum_{i=1}^{I}
\lf[\frac{|\lz_i|\mathbf{1}_{B^{(i)}}}{\|\mathbf{1}_{B^{(i)}}\|_{\lv}}\r]^
{\underline{p}}\r\}^{1/\underline{p}}\r\|_{\lv}
\end{align*}
with $\underline{p}$ as in \eqref{2e3}, where the
infimum is taken over all decompositions of $f$ as above.
\end{definition}

Denote by $C(\rn)$ the \emph{set of all continuous functions} and by $C_{\rm c}^\fz(\rn)$
the \emph{set of all infinite differentiable functions with compact support}. The
succeeding three lemmas are just, respectively, \cite[Theorem 5.4 and Lemma 7.3]{lwyy18}
and \cite[Lemma 5.4]{lyy17hl}.

\begin{lemma}\label{4l1}
Let $p(\cdot)\in C^{\log}(\rn)$, $r\in(\max\{p_+,1\},\fz]$ and $s$ be as in \eqref{3e2},
where $p_+$ is as in \eqref{2e3}.
\begin{enumerate}
\item[{\rm (i)}]
If $r\in(\max\{p_+,1\},\fz)$, then $\|\cdot\|_{\vfah}$
and $\|\cdot\|_{\vh}$ are two equivalent quasi-norms on $\vfah$;
\item[{\rm (ii)}]
$\|\cdot\|_{\vfahfz}$
and $\|\cdot\|_{\vh}$ are two equivalent quasi-norms on $\vfahfz\cap C(\rn)$.
\end{enumerate}
\end{lemma}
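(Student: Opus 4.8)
\textbf{Proof proposal for Lemma \ref{4l1}.}

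The plan is to follow the now-standard route for finite atomic characterizations of Hardy-type spaces (as in Meda--Sjögren--Vallarino and its later adaptations), which the authors have already imported into the anisotropic variable setting in \cite{lwyy18}; here we only need to recall the structure since the statement is quoted verbatim from there. For (i), fix $r\in(\max\{p_+,1\},\fz)$. Since every $(p(\cdot),r,s)$-atom is in particular a $(p(\cdot),r,s)$-atom used to define $\vah$, and $\vfah\subset\vah$, the inequality $\|f\|_{\vh}\ls\|f\|_{\vfah}$ is immediate from Lemma \ref{3l3} and the definition of $\|\cdot\|_{\vah}$ (a finite decomposition is in particular an admissible infinite one). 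The real content is the reverse: for $f\in\vfah$, one must produce, for any $\var>0$, a \emph{finite} atomic decomposition whose coefficient quasi-norm is controlled by $(1+\var)\|f\|_{\vh}$. First I would normalize so that $\|f\|_{\vh}=1$ and invoke the (infinite) atomic decomposition $f=\sum_i\lz_ia_i$ coming from Lemma \ref{3l3}, together with the Calderón--Zygmund-type construction underlying it (in which the atoms are supported in dilated balls coming from a Whitney decomposition of the level sets of $M_N(f)$). Because $f\in\vfah$ has, by hypothesis, an \emph{a priori} finite atomic decomposition, $f$ has compact support and, being a finite linear combination of $L^r$ functions with vanishing moments, $f\in L^r(\rn)$ with all moments up to order $s$ vanishing. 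One then splits the infinite decomposition into the "big" part $\sum_{|i|\le K}\lz_ia_i$ and a "small" tail; the tail, by the size and support information on the $a_i$ and Lemmas \ref{3l5}, \ref{3l1}, can be collapsed into a single atom (or finitely many atoms) after absorbing the moment corrections, because $f$ itself has vanishing moments and the relevant polynomials are controlled by the $L^r$ bound on the local pieces; the point is that the collapsed piece is supported in a single dilated ball containing $\supp f$ and has $L^r$ norm controlled, up to a constant times $\var$, so it is a constant times a $(p(\cdot),r,s)$-atom. Summing the coefficient quasi-norms and using the subadditivity of $\|\cdot\|_{\lv}^{\underline p}$ from Lemma \ref{3l5} gives $\|f\|_{\vfah}\le(1+C\var)\|f\|_{\vh}$, and letting $\var\to0$ closes the argument.

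For (ii), the same scheme is run with $r=\fz$, but now one must additionally arrange that the finitely many atoms in the final decomposition are continuous (indeed one may take them $C_{\rm c}^\fz$ after a mollification) when $f\in\vfahfz\cap C(\rn)$. Here I would use that a continuous function with compact support, after the Whitney/Calderón--Zygmund construction, produces atoms that are themselves continuous on account of $f$ being continuous and the good-$\lambda$ selection being made at a scale where $f$ is essentially constant; the tail collapsing step produces a single $L^\fz$ atom which, because $f$ is continuous and the collapsed remainder equals $f$ minus a continuous finite sum, is again continuous. The inequality $\|f\|_{\vh}\ls\|f\|_{\vfahfz}$ is trivial as before. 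The dominant technical obstacle is precisely the tail-collapsing step: one must show that the infinite sum $\sum_{|i|>K}\lz_ia_i$, after subtracting off the polynomial needed to restore vanishing moments on a large dilated ball $B^\ast\supset\supp f$, is a bounded multiple of $\var$ times a $(p(\cdot),r,s)$-atom associated to $B^\ast$; this uses the anisotropic Hölder inequality, the dilation estimates of Lemma \ref{3l4}(ii), the maximal-function control $|\lz_i|\ls\|\mathbf{1}_{B^{(i)}}\|_{\lv}$ on the selected balls, and the norm comparison Lemma \ref{3l1} to sum the $\lv$-quasi-norms of the overlapping dilated balls $A^\tau B^{(i)}$, all of which are already available in the excerpt. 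Once this estimate is in hand, both parts follow by the $\var\to0$ limiting argument together with the already-established equivalence $\vh=\vah$ from Lemma \ref{3l3}.
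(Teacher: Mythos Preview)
The paper does not prove Lemma \ref{4l1} at all: it is simply quoted as \cite[Theorem 5.4]{lwyy18}, with no argument reproduced here. Your sketch follows the standard Meda--Sj\"ogren--Vallarino scheme (normalize, use the Calder\'on--Zygmund atomic decomposition behind Lemma \ref{3l3}, truncate, and collapse the tail into a single atom supported in a large dilated ball containing $\supp f$), which is precisely the route taken in \cite{lwyy18}; so your approach is the same as the original source's, and there is nothing to compare against in the present paper.

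One small correction to your outline: in the tail-collapsing step you speak of ``subtracting off the polynomial needed to restore vanishing moments,'' but in fact no polynomial correction is required. Since $f$ itself has vanishing moments up to order $s$ and the truncated piece $\sum_{|i|\le K}\lz_ia_i$ is a finite sum of atoms (each with vanishing moments), the tail $f-\sum_{|i|\le K}\lz_ia_i$ automatically inherits vanishing moments up to order $s$. The work is only in controlling its $L^r$ (respectively $L^\infty$) norm on the fixed large dilated ball $B^\ast\supset\supp f$ by a small multiple of $|B^\ast|^{1/r}\|\mathbf{1}_{B^\ast}\|_{\lv}^{-1}$, which is where the Calder\'on--Zygmund structure of the decomposition (pointwise bounds of the atoms by the level-set threshold) is used. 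Apart from this, your plan is correct.
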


\begin{lemma}\label{4l2}
Let $p(\cdot)\in C^{\log}(\rn)$. Then $\vh\cap C_{\rm c}^\fz(\rn)$ is dense in $\vh$.
\end{lemma}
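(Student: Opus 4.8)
The statement to be proved is Lemma~\ref{4l2}: that $\vh\cap C_{\rm c}^\fz(\rn)$ is dense in $\vh$. The plan is to combine the finite atomic characterization of $\vh$ (Lemma~\ref{4l1}) with a mollification argument. First I would observe that, by Lemma~\ref{3l3} and standard density of finite atomic sums, the space $\vfah$ (for any fixed admissible $r$, say $r\in(\max\{p_+,1\},\fz)$) is dense in $\vh$; indeed, given $f\in\vh$ with atomic decomposition $f=\sum_{i\in\nn}\lz_ia_i$, the partial sums $f_K:=\sum_{i=1}^K\lz_ia_i$ lie in $\vfah$ and converge to $f$ in $\vh$, because the tail $\sum_{i>K}\lz_ia_i$ has $\vh$-quasi-norm controlled, via Lemma~\ref{3l5} and the monotone convergence/absolute continuity of the $\lv$-quasi-norm, by $\|\{\sum_{i>K}[|\lz_i|\mathbf{1}_{B^{(i)}}/\|\mathbf{1}_{B^{(i)}}\|_{\lv}]^{\underline p}\}^{1/\underline p}\|_{\lv}\to0$ as $K\to\fz$. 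Hence it suffices to approximate an arbitrary finite linear combination of $(p(\cdot),r,s)$-atoms by elements of $C_{\rm c}^\fz(\rn)\cap\vh$, and by linearity it suffices to handle a single $(p(\cdot),r,s)$-atom $a$ supported in a dilated ball $B=x_0+B_{i_0}$.

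Next, fix such an atom $a$ and let $\phi\in C_{\rm c}^\fz(\rn)$ be a standard bump function with $\int_\rn\phi=1$, and set $\phi_t(\cdot):=t^{-n}\phi(\cdot/t)$ for $t\in(0,1)$; then $a_t:=a*\phi_t\in C_{\rm c}^\fz(\rn)$, its support stays within a fixed bounded enlargement of $B$ for all small $t$, and $a_t$ inherits the vanishing moments of $a$ up to order $s$ (since convolution with $\phi_t$ commutes with taking moments against polynomials, using $\int\phi=1$ and that $a$ has the moments). A routine estimate gives $\|a_t-a\|_{L^r(\rn)}\to0$ as $t\to0^+$. Combining the uniformly bounded (enlarged) support, the vanishing moments, and the $L^r$-smallness, I would check that, up to a harmless constant and for $t$ small, $c^{-1}(a_t-a)$ is a $(p(\cdot),r,s)$-atom associated to the enlarged ball times a small factor $\var_t\to0$; concretely, $a_t-a$ supported in $\wz B:=x_0+B_{i_0+M}$ for a fixed $M\in\nn$ with $\|a_t-a\|_{L^r(\rn)}\le \var_t|\wz B|^{1/r}\|\mathbf{1}_{\wz B}\|_{\lv}^{-1}$, where $\var_t\to0$ (here I use $\|\mathbf{1}_B\|_{\lv}\sim\|\mathbf{1}_{\wz B}\|_{\lv}$, which follows from $p(\cdot)\in C^{\log}(\rn)$ and Lemma~\ref{3l1}-type estimates). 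By the $\vfah$-norm of a single-atom decomposition and Lemma~\ref{4l1}(i), this yields $\|a_t-a\|_{\vh}\ls\var_t\|\mathbf{1}_{\wz B}\|_{\lv}^{-1}\|\mathbf{1}_{\wz B}\|_{\lv}=\var_t\to0$; together with $a_t\in C_{\rm c}^\fz(\rn)\subset\vh$ this completes the approximation of $a$, and hence of any finite atomic sum, by elements of $C_{\rm c}^\fz(\rn)\cap\vh$.

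Putting the two steps together: given $f\in\vh$ and $\eta>0$, choose a finite atomic sum $g\in\vfah$ with $\|f-g\|_{\vh}<\eta/2$ (using the $\underline p$-triangle inequality of Lemma~\ref{3l5} to handle the quasi-norm), then mollify each atom in $g$ to obtain $g_t\in C_{\rm c}^\fz(\rn)$ with $\|g-g_t\|_{\vh}<\eta/2$ for $t$ small, and conclude $\|f-g_t\|_{\vh}^{\underline p}\le\|f-g\|_{\vh}^{\underline p}+\|g-g_t\|_{\vh}^{\underline p}<\eta^{\underline p}$ (absorbing the factor $2$ into $\eta$ appropriately). The main obstacle I anticipate is the bookkeeping around the mollification of a single atom, specifically verifying that $a_t-a$ can be renormalized into a genuine $(p(\cdot),r,s)$-atom (associated to a \emph{fixed} enlarged dilated ball, uniformly in small $t$) with a normalization constant tending to $0$; this requires (a) controlling the support of $a_t$ inside a fixed dilation of $B$ for all small $t$ — which forces $t$ to be chosen relative to the geometry of $B_{i_0}$ — and (b) the comparison $\|\mathbf{1}_B\|_{\lv}\sim\|\mathbf{1}_{\wz B}\|_{\lv}$, which is where the globally log-Hölder continuity of $p(\cdot)$ enters essentially. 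Everything else — the convolution preserving moments, the $L^r$ convergence $a*\phi_t\to a$, and the passage from atoms to arbitrary $\vh$ elements via Lemmas~\ref{3l3}, \ref{4l1} and \ref{3l5} — is routine.
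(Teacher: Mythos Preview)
Your proof sketch is correct. The paper itself does not prove Lemma~\ref{4l2} but simply cites it as \cite[Lemma~7.3]{lwyy18}, so there is no in-paper argument to compare against. The approach you outline---reducing to finite atomic sums via the atomic characterization and the dominated-convergence tail estimate, then mollifying each atom and checking that $a_t-a$ is (after normalization by a vanishing factor $\var_t$) a genuine $(p(\cdot),r,s)$-atom on a fixed enlarged dilated ball---is the standard route and goes through as you describe. Your identification of the two delicate points is accurate: (a) for a fixed atom supported in $x_0+B_{i_0}$, one must take $t$ small enough (depending on $i_0$) so that the Euclidean support of $\phi_t$ sits inside $B_{i_0}$, after which Lemma~\ref{3l4}(i) gives $\supp a_t\subset x_0+B_{i_0+\tau}$; and (b) the comparison $\|\mathbf{1}_{B}\|_{\lv}\sim\|\mathbf{1}_{A^\tau B}\|_{\lv}$ follows from Lemma~\ref{3l1} (hence from the log-H\"older condition). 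The moment preservation for $a_t$ is immediate from the binomial expansion of $(y+z)^\gz$ together with the vanishing moments of $a$ up to order $s$, and the restriction to finite $r$ ensures $\|a_t-a\|_{L^r(\rn)}\to0$.
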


\begin{lemma}\label{4l3}
Let $p(\cdot)\in C^{\log}(\rn)$ and $p_-\in(1,\fz)$.
Then there exists a positive constant $C$ such that,
for arbitrary two subsets $E_1$, $E_2$ of $\rn$ with $E_1\subset E_2$,
\begin{align*}
C^{-1}\lf(\frac{|E_1|}{|E_2|}\r)^{\frac 1{p_-}}
\le\frac{\|\mathbf{1}_{E_1}\|_{L^{p(\cdot)}(\rn)}}
{\|\mathbf{1}_{E_2}\|_{L^{p(\cdot)}(\rn)}}
\le C\lf(\frac{|E_1|}{|E_2|}\r)^{\frac 1{p_+}}.
\end{align*}
\end{lemma}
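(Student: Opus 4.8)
The plan is to prove the two inequalities separately; each reduces, through the relation between the Luxemburg quasi-norm and the modular $\varrho_{p(\cdot)}$, to a modular estimate for a characteristic function, and the substantive work is to control such modulars by means of the global log-H\"older continuity of $p(\cdot)$.

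First dispose of the trivial cases: if $|E_1|=0$ all three quantities vanish, while if $|E_2|=\fz$ then $p_+<\fz$ forces $\varrho_{p(\cdot)}(\mathbf{1}_{E_2}/\lz)=\fz$ for every $\lz\in(0,\fz)$, so $\|\mathbf{1}_{E_2}\|_{\lv}=\fz$ and $(|E_1|/|E_2|)^{1/p_-}=0$; thus assume $0<|E_1|\le|E_2|<\fz$. Recall the standard consequence of $p_+<\fz$ that, for any measurable $E$ with $0<|E|<\fz$, one has $\|\mathbf{1}_E\|_{\lv}=\inf\{\lz\in(0,\fz):\int_E\lz^{-p(x)}\,dx\le1\}$ and in fact $\int_E\|\mathbf{1}_E\|_{\lv}^{-p(x)}\,dx=1$, since $\lz\mapsto\int_E\lz^{-p(x)}\,dx$ is continuous and strictly decreasing from $\fz$ to $0$ on $(0,\fz)$. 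Write $\lz_i:=\|\mathbf{1}_{E_i}\|_{\lv}$, so that $\int_{E_i}\lz_i^{-p(x)}\,dx=1$ for $i\in\{1,2\}$ and $\lz_1\le\lz_2$ by monotonicity of the quasi-norm; put $u:=\lz_2/\lz_1\ge1$.

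Inserting $\lz_2=u\lz_1$ into $\int_{E_2}\lz_2^{-p(x)}\,dx=1$ and using $u^{-p_+}\le u^{-p(x)}\le u^{-p_-}$ (valid since $u\ge1$ and $p_-\le p(x)\le p_+$) gives $[\int_{E_2}\lz_1^{-p(x)}\,dx]^{1/p_+}\le u\le[\int_{E_2}\lz_1^{-p(x)}\,dx]^{1/p_-}$; inserting instead $\lz_1=u^{-1}\lz_2$ into $\int_{E_1}\lz_1^{-p(x)}\,dx=1$ and using $u^{p_-}\le u^{p(x)}\le u^{p_+}$ gives $[\int_{E_1}\lz_2^{-p(x)}\,dx]^{-1/p_+}\le u\le[\int_{E_1}\lz_2^{-p(x)}\,dx]^{-1/p_-}$. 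Since $\lz_1/\lz_2=1/u$, the two inequalities of the lemma follow once one establishes the modular comparisons
$$\int_{E_2}\lz_1^{-p(x)}\,dx\ls\frac{|E_2|}{|E_1|}\qquad\mathrm{and}\qquad\frac{|E_2|}{|E_1|}\ls\max\lf\{\int_{E_2}\lz_1^{-p(x)}\,dx,\ \lf[\int_{E_1}\lz_2^{-p(x)}\,dx\r]^{-1}\r\},$$
the first yielding the lower inequality (via $u\le[\int_{E_2}\lz_1^{-p(x)}\,dx]^{1/p_-}$) and the second the upper one. Reading these through $\int_{E_1}\lz_1^{-p(x)}\,dx=1=\int_{E_2}\lz_2^{-p(x)}\,dx$, they amount to saying that the average of $\lz_i^{-p(\cdot)}$ over the larger set is comparable to its average over the smaller one, modulo the measure ratio---an $A_\infty$-type property of the weights $\lz_i^{-p(\cdot)}$. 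The decisive input is $p(\cdot)\in C^{\log}(\rn)$: the elementary bound $\min\{\lz^{-p_-},\lz^{-p_+}\}\,|E|\le\varrho_{p(\cdot)}(\mathbf{1}_E/\lz)\le\max\{\lz^{-p_-},\lz^{-p_+}\}\,|E|$ only pins $\|\mathbf{1}_E\|_{\lv}$ between $\min\{|E|^{1/p_-},|E|^{1/p_+}\}$ and $\max\{|E|^{1/p_-},|E|^{1/p_+}\}$, which is too lossy; but \eqref{2e4} shows that over sets of small measure the multiplicative oscillation of $\lz^{-p(\cdot)}$ is bounded when $\lz$ is of the order of the corresponding power of the measure, and \eqref{2e5} supplies the same control for the part of $E_2$ far from the origin, where $\lz_2$ may be large but the oscillation of $p$ is correspondingly small; together they give $\varrho_{p(\cdot)}(\mathbf{1}_E/\lz)\sim\lz^{-p_E}|E|$ with a representative exponent $p_E\in[p_-,p_+]$, which is exactly what makes the displayed comparisons run with uniform constants.

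The step I expect to be the main obstacle is precisely this upgrading of the modular of a characteristic function to a single power of its measure: because $E_1$ and $E_2$ are arbitrary measurable sets, not dilated balls, one cannot simply say that $p$ is nearly constant on them, so the argument must coordinate the scale of each $\lz_i$ (which depends on $|E_i|$ and may be very large or very small) with the admissible variation of $p$, invoking \eqref{2e4} at small scales, \eqref{2e5} at large scales, and, if needed, a bounded-overlap covering of $E_1$, $E_2$ by dilated balls. That $C^{\log}(\rn)$ is genuinely used, rather than merely inherited, is clear from the failure of the statement for general $p(\cdot)\in\cp(\rn)$: taking $E_1\subset E_2$ of measure tending to $0$ on which $p$ jumps between values near $p_-$ and near $p_+$ drives $\|\mathbf{1}_{E_1}\|_{\lv}/\|\mathbf{1}_{E_2}\|_{\lv}$ outside every fixed power of $|E_1|/|E_2|$. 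By contrast, the hypothesis $p_-\in(1,\fz)$ is inessential to the truth of the estimate---by Lemma \ref{3l5} the substitution $p(\cdot)\mapsto sp(\cdot)$ raises $\|\mathbf{1}_E\|_{\lv}$ to its $1/s$-th power, so one may always arrange $p_->1$ by rescaling---but it is convenient here, since keeping $p'(\cdot)$ finite permits parts of the argument to be handled via H\"older's inequality in $\lv$.
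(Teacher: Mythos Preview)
The paper does not prove this lemma; it simply cites \cite[Lemma~5.4]{lyy17hl}. Your reduction to the modular identities $\int_{E_i}\lz_i^{-p(x)}\,dx=1$ and the bounds on $u=\lz_2/\lz_1$ is correct, and you are right to isolate as the crux the replacement of $\varrho_{p(\cdot)}(\mathbf{1}_E/\lz)$ by a single power $\lz^{-p_E}|E|$. The difficulty, however, is not merely an obstacle but a genuine obstruction: for \emph{arbitrary} measurable $E_1\subset E_2$ the stated inequality is false, so your plan cannot be completed. In dimension one with $A=2$, take $p(\cdot)\in C^{\log}(\rr)$ with $p_-=p(0)=2$, $p_+=p_\fz=4$, and $p\equiv4$ on $[10,\fz)$. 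For small $\dz>0$ set $E_1:=[100,100+\dz^4]$ and $E_2:=[0,\dz]\cup E_1$. Then $\|\mathbf{1}_{E_1}\|_{\lv}=\dz$ (as $p\equiv4$ on $E_1$), while the modular equation $\dz\lz^{-2}+\dz^4\lz^{-4}=1$ gives $\|\mathbf{1}_{E_2}\|_{\lv}\sim\dz^{1/2}$; hence $\|\mathbf{1}_{E_1}\|_{\lv}/\|\mathbf{1}_{E_2}\|_{\lv}\sim\dz^{1/2}$, whereas $(|E_1|/|E_2|)^{1/p_+}\sim(\dz^3)^{1/4}=\dz^{3/4}$, and the upper inequality fails as $\dz\to0$. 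Interchanging the values $p(0)=4$, $p_\fz=2$ yields the analogous failure of the lower inequality. The mechanism is exactly the one you anticipated: a disconnected $E_2$ samples $p(\cdot)$ at values that log-H\"older continuity does not relate.

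In the paper the lemma is invoked only in the proof of Theorem~\ref{4t1}, with $E_1=x_i+B_{\ell_i}$ and $E_2=x_i+B_{\ell_i+k_0}$ concentric dilated balls. For such pairs the estimate \emph{is} true: the log-H\"older conditions \eqref{2e4}--\eqref{2e5} give the standard comparison $\|\mathbf{1}_{x_0+B_k}\|_{\lv}\sim|B_k|^{1/q_k}$ with an effective exponent $q_k\in[p_-,p_+]$ depending continuously on the scale and center, from which both inequalities follow (indeed, for $k_0\ge0$ the upper bound needed in \eqref{4e2} is just monotonicity, and the numerator is handled via $\HL$ rather than via this lemma). So your argument is salvageable, and essentially correct, once ``arbitrary subsets'' is read as ``nested dilated balls''; for general measurable sets no proof can exist.
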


Applying the above three lemmas and Theorem \ref{3t1}, we establish a criterion on
the boundedness of linear operators on $\vh$ as follows.

\begin{theorem}\label{4t1}
Assume that $T$ is a linear operator defined on the set of all measurable functions.
Let $p(\cdot)\in C^{\log}(\rn)$, $r\in(\max\{p_+,1\},\fz]$ with $p_+$
as in \eqref{2e3} and $\widetilde{s}$
be as in \eqref{3e2} with $s$ replaced by $\widetilde{s}$.
If there exist some $k_0\in\zz$ and a positive constant $C$ such that,
for any $(p(\cdot),r,\widetilde{s})$-atom $\wz a$ supported in
some dilated ball $x_0+B_{i_0}\in\B$ with $x_0\in\rn$, $i_0\in\zz$ and $\B$ as in
\eqref{2e1}, $\frac1C T(\wz a)$ is a $(p(\cdot),r,s,\var)$-molecule associated to
$x_0+B_{i_0+k_0}$, where $s$ and $\var$ are as in Theorem \ref{3t1}, then $T$ has a
unique bounded linear extension on $\vh$.
\end{theorem}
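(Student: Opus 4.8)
The plan is to combine the finite atomic characterization of $\vh$ (Lemma \ref{4l1}), a density argument (Lemma \ref{4l2}), and the molecular characterization just established (Theorem \ref{3t1}). First I would fix $q\in(\max\{p_+,1\},\fz)$ (if $r=\fz$, replace $r$ by such a finite $q$; otherwise set $q:=r$), so that by Lemma \ref{4l1}(i) the quasi-norms $\|\cdot\|_{\vfah}$ with atoms of integrability $q$ and $\|\cdot\|_{\vh}$ are equivalent on $\vfah$, and by Lemma \ref{4l2} the set $\vh\cap C_{\rm c}^\fz(\rn)$ — which is contained in $\vfah$ up to the usual identifications — is dense in $\vh$. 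Thus it suffices to prove the a priori estimate
\begin{align}\label{4e1}
\|T(f)\|_{\vh}\ls\|f\|_{\vh}\qquad\text{for all }f\in\vfah,
\end{align}
and then extend $T$ to all of $\vh$ by density and continuity; the uniqueness of the extension is immediate since $\vfah$ (or $\vh\cap C_{\rm c}^\fz(\rn)$) is dense.

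To prove \eqref{4e1}, I would take $f\in\vfah$ and use Lemma \ref{4l1}(i) to write $f=\sum_{i=1}^I\lz_i a_i$ as a \emph{finite} linear combination of $(p(\cdot),q,\wz s)$-atoms $a_i$ supported in $B^{(i)}=x_i+B_{\ell_i}$, with
\begin{align*}
\lf\|\lf\{\sum_{i=1}^I\lf[\frac{|\lz_i|\mathbf{1}_{B^{(i)}}}{\|\mathbf{1}_{B^{(i)}}\|_{\lv}}\r]^{\underline{p}}\r\}^{1/\underline{p}}\r\|_{\lv}\ls\|f\|_{\vh}.
\end{align*}
By linearity, $T(f)=\sum_{i=1}^I\lz_i T(a_i)$ in $\cs'(\rn)$. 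By hypothesis, each $\frac1C T(a_i)$ is a $(p(\cdot),r,s,\var)$-molecule associated to the dilated ball $\wz B^{(i)}:=x_i+B_{\ell_i+k_0}$. Hence $T(f)=\sum_{i=1}^I (C\lz_i)\big[\frac1C T(a_i)\big]$ is a finite — hence convergent — molecular decomposition, so $T(f)\in\vmh=\vh$ by Theorem \ref{3t1}, and
\begin{align*}
\|T(f)\|_{\vh}\sim\|T(f)\|_{\vmh}
\ls\lf\|\lf\{\sum_{i=1}^I\lf[\frac{|C\lz_i|\mathbf{1}_{\wz B^{(i)}}}{\|\mathbf{1}_{\wz B^{(i)}}\|_{\lv}}\r]^{\underline{p}}\r\}^{1/\underline{p}}\r\|_{\lv}.
\end{align*}
It then remains to replace $\wz B^{(i)}=x_i+B_{\ell_i+k_0}$ by $B^{(i)}=x_i+B_{\ell_i}$ on the right-hand side, up to a constant independent of $i$ and of the decomposition.

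The main obstacle — and the one genuinely non-routine point — is precisely this last replacement: controlling $\mathbf{1}_{\wz B^{(i)}}/\|\mathbf{1}_{\wz B^{(i)}}\|_{\lv}$ by a constant multiple of (a maximal-function surrogate of) $\mathbf{1}_{B^{(i)}}/\|\mathbf{1}_{B^{(i)}}\|_{\lv}$ uniformly in $i$, since $k_0$ may be negative (so $\wz B^{(i)}$ can be smaller than $B^{(i)}$) and the dilated balls $B^{(i)}$ have wildly varying scales $\ell_i$. If $k_0\ge0$, then $\mathbf{1}_{\wz B^{(i)}}\le b^{k_0}\HL(\mathbf{1}_{B^{(i)}})$ pointwise, so one picks $t\in(1,\underline p/\underline p\,)\cdots$ — more precisely, one uses $\mathbf{1}_{\wz B^{(i)}}\ls[\HL(\mathbf{1}_{B^{(i)}})]^{1/t}$ for suitable $t\in(0,1)$ with $p(\cdot)/t$ still admissible, together with Lemmas \ref{3l5} and \ref{3l6}, exactly as in the estimate of ${\rm I}_2$ in the proof of Theorem \ref{3t1}. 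The size comparison $\|\mathbf{1}_{\wz B^{(i)}}\|_{\lv}\sim\|\mathbf{1}_{B^{(i)}}\|_{\lv}$, needed to handle the normalizing factors, follows from Lemma \ref{4l3} applied on a sub-level where $p_-$ can be taken $>1$ (again after raising to a suitable power via Lemma \ref{3l5}), since $|\wz B^{(i)}|/|B^{(i)}|=b^{k_0}$ is a fixed constant. Assembling these ingredients yields $\|T(f)\|_{\vh}\ls\|f\|_{\vh}$ and completes the proof.
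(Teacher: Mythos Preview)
Your overall strategy matches the paper's almost exactly: establish the a~priori estimate on the finite atomic space via Theorem~\ref{3t1}, then extend by density. The paper also handles the passage from the shifted balls $x_i+B_{\ell_i+k_0}$ back to $x_i+B_{\ell_i}$ the same way you outline, combining Lemma~\ref{4l3} for the normalizing factors with the pointwise bound $\mathbf{1}_{x_i+B_{\ell_i+k_0}}\ls[\HL(\mathbf{1}_{x_i+B_{\ell_i}})]^{1/\omega}$ for some $\omega\in(0,\underline{p})$ and then Lemmas~\ref{3l5} and~\ref{3l6}. Incidentally, this bound holds uniformly in the sign of $k_0$: when $k_0<0$ one simply has $\wz B^{(i)}\subset B^{(i)}$ and the inequality is trivial, so there is no genuine obstacle there.

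There is, however, a real gap in your treatment of the case $r=\fz$. You propose to ``replace $r$ by a finite $q$'' and decompose $f$ into $(p(\cdot),q,\wz s)$-atoms via Lemma~\ref{4l1}(i). But the hypothesis of the theorem only tells you that $T$ maps $(p(\cdot),r,\wz s)$-atoms --- that is, $(p(\cdot),\fz,\wz s)$-atoms --- to molecules; a generic $(p(\cdot),q,\wz s)$-atom need not be an $\fz$-atom, so you cannot conclude that $\frac1C T(a_i)$ is a molecule. The paper avoids this by treating $r=\fz$ separately: it keeps the $\fz$-atoms, invokes Lemma~\ref{4l1}(ii) for the norm equivalence on $H_{A,\,{\rm fin}}^{p(\cdot),\fz,\wz s}(\rn)\cap C(\rn)$, and uses the density of this space in $\vh$ (which follows from Lemmas~\ref{4l2} and~\ref{3l3}). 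With that correction your argument goes through and coincides with the paper's.
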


\begin{proof}
Let $p(\cdot)\in C^{\log}(\rn)$, $r\in(\max\{p_+,1\},\fz]$ and
$\widetilde{s}\in[\lfloor(1/{p_-}-1)
{\ln b}/{\ln\lz_-}\rfloor,\fz)\cap\zz_+$ with $p_-$ as in \eqref{2e3}. We next prove
Theorem \ref{4t1} by considering two cases.

\emph{Case 1).} $r\in(\max\{p_+,1\},\fz)$. In this case, for any
$f\in H_{A,\,{\rm fin}}^{p(\cdot),r,\widetilde{s}}(\rn)$, by Definition \ref{4d1},
we know that there exist some $I\in\nn$, three finite sequences
$\{\lz_i\}_{i\in[1,I]\cap\nn}\subset\mathbb{C}$, $\{x_i\}_{i\in[1,I]\cap\nn}\subset\rn$
and $\{\ell_i\}_{i\in[1,I]\cap\nn}\subset\zz$, and a finite sequence of
$(p(\cdot),r,\wz{s})$-atoms, $\{a_i\}_{i\in[1,I]\cap\nn}$, supported,
respectively, in $\{x_i+B_{\ell_i}\}_{i\in[1,I]\cap\nn}\subset\B$
such that $f=\sum_{i=1}^I\lz_ia_i$ in $\cs'(\rn)$ and
\begin{align}\label{4e1}
\|f\|_{H_{A,\,{\rm fin}}^{p(\cdot),r,\widetilde{s}}(\rn)}\sim\lf\|\lf\{\sum_{i=1}^{I}
\lf[\frac{|\lz_i|\mathbf{1}_{x_i+B_{\ell_i}}}{\|\mathbf{1}_{x_i+B_{\ell_i}}\|_{\lv}}\r]^
{\underline{p}}\r\}^{1/\underline{p}}\r\|_{\lv}.
\end{align}
From this and the linearity of $T$, it is easy to see that
$T(f)=\sum_{i=1}^I\lz_iT(a_i)$ in $\cs'(\rn)$, where, for any $i\in[1,I]\cap\nn$,
$\frac1C T(a_i)$ with $C$ being a positive constant independent of $i$ is
a $(p(\cdot),r,s,\var)$-molecule associated to $x_i+B_{\ell_i+k_0}$ with $s$, $\var$
and $k_0$ as in Theorem \ref{4t1}. By this, Theorem \ref{3t1}, Definition \ref{3d2},
Lemmas \ref{4l3}, \ref{3l6} and \ref{3l5}, \eqref{4e1} and Lemma \ref{4l1},
we further conclude that, for any
$f\in H_{A,\,{\rm fin}}^{p(\cdot),r,\widetilde{s}}(\rn)$,
\begin{align}\label{4e2}
\|T(f)\|_{\vh}&\sim\|T(f)\|_{\vmh}
\ls\lf\|\lf\{\sum_{i=1}^{I}\lf[\frac{|\lz_i|
\mathbf{1}_{x_i+B_{\ell_i+k_0}}}{\|\mathbf{1}_{x_i+B_{\ell_i+k_0}}\|_
{\lv}}\r]^{\underline{p}}\r\}^{1/\underline{p}}\r\|_{\lv}\\
&\ls b^{k_0/\omega}\lf\|\lf[\sum_{i=1}^{I}\lf\{\frac{|\lz_i|
[\HL(\mathbf{1}_{x_i+B_{\ell_i}})]^{1/\omega}}{\|\mathbf{1}_{x_i+B_{\ell_i}}\|_
{\lv}}\r\}^{\underline{p}}\r]^{1/\underline{p}}\r\|_{\lv}\noz\\
&\sim b^{k_0/\omega}\lf\|\lf\{\sum_{i=1}^{I}\lf[\frac{|\lz_i|^\omega
\HL(\mathbf{1}_{x_i+B_{\ell_i}})}{\|\mathbf{1}_{x_i+B_{\ell_i}}\|_{\lv}^\omega}\r]^
{\underline{p}/\omega}\r\}^{\omega/\underline{p}}\r\|_
{L^{p(\cdot)/\omega}(\rn)}^{1/\omega}\noz\\
&\ls\lf\|\lf\{\sum_{i=1}^{I}\lf[\frac{|\lz_i|
\mathbf{1}_{x_i+B_{\ell_i}}}{\|\mathbf{1}_{x_i+B_{\ell_i}}\|_{\lv}}\r]^
{\underline{p}}\r\}^{1/\underline{p}}\r\|_{\lv}
\sim\|f\|_{H_{A,\,{\rm fin}}^{p(\cdot),r,\widetilde{s}}(\rn)}\sim\|f\|_{\vh},\noz
\end{align}
where $\omega\in(0,\underline{p})$ is a constant.

Now let $f\in\vh$. Then, by the obvious density of
$H_{A,\,{\rm fin}}^{p(\cdot),r,\widetilde{s}}(\rn)$ in $\vh$ with respect to the
quasi-norm $\|\cdot\|_{\vh}$, we find that there exists a Cauchy sequence
$\{f_j\}_{j\in\nn}\subset H_{A,\,{\rm fin}}^{p(\cdot),r,\widetilde{s}}(\rn)$
such that
$$\lim_{j\to\fz}\lf\|f_j-f\r\|_{\vh}=0.$$
This, combined with the linearity of $T$ and \eqref{4e2}, implies that,
as $j$, $m\to\fz$,
\begin{align*}
\lf\|T(f_j)-T(f_m)\r\|_{\vh}=\lf\|T(f_j-f_m)\r\|_{\vh}
\ls\lf\|f_j-f_m\r\|_{\vh}\to0.
\end{align*}
Thus, $\{T(f_j)\}_{j\in\nn}$ is also a Cauchy sequence in $\vh$. From this and the
completeness of $\vh$, it follows that there exists some $g\in\vh$ such that
$g=\lim_{j\to\fz}T(f_j)$ in $\vh$. Then let $T(f):=g$. By this and \eqref{4e2},
we know that $T(f)$ is well defined and, moreover, for any $f\in\vh$,
\begin{align}\label{4e3}
\|T(f)\|_{\vh}
&\ls\limsup_{j\to\fz}\lf[\lf\|T(f)-T(f_j)\r\|_{\vh}
+\lf\|T(f_j)\r\|_{\vh}\r]\\
&\sim\limsup_{j\to\fz}\lf\|T(f_j)\r\|_{\vh}
\ls\lim_{j\to\fz}\lf\|f_j\r\|_{\vh}\sim\|f\|_{\vh},\noz
\end{align}
which completes the proof of Theorem \ref{4t1} in Case 1).

\emph{Case 2).} $r=\fz$. In this case, by Lemmas \ref{4l2} and \ref{3l3}, it is easy
to see that $H_{A,\,{\rm fin}}^{p(\cdot),\fz,\widetilde{s}}(\rn)\cap C(\rn)$ is
dense in $\vh$. By this, repeating the proof of Case 1) with some slight modifications,
we conclude that Theorem \ref{4t1} also holds true when $r=\fz$. This finishes the proof
of Theorem \ref{4t1}.
\end{proof}

Next we consider the boundedness of anisotropic Calder\'{o}n--Zygmund
operators from $\vh$ to itself [or to $\lv$]. To this end, we first recall the notion
of anisotropic Calder\'{o}n--Zygmund operators from \cite[p.\,60, Definition 9.1]{mb03}
as follows.

\begin{definition}\label{2d8}
A locally integrable function $\mathcal{K}$ on
$\Omega:=\{(x,y)\in\rn\times\rn:\ x\neq y\}$ is called an \emph{anisotropic
Calder\'{o}n--Zygmund standard kernel} if there exist two positive constants $C$
and $\dz$ such that, for any $x,\,y,\,\wz{x},\,\wz{y}\in\Omega$,
$$|\mathcal{K}(x,y)|\le \frac C{\rho(x-y)}\quad{\rm when}\quad x\neq y,$$
\begin{align*}
|\mathcal{K}(x,y)-\mathcal{K}(x,\wz{y})|\le C\frac{[\rho(y-\wz{y})]^\dz}
{[\rho(x-y)]^{1+\dz}}
\quad{\rm when}\quad \rho(x-y)\ge b^{2\tau}\rho(y-\wz{y}),
\end{align*}
and
\begin{align*}
|\mathcal{K}(x,y)-\mathcal{K}(\wz{x},y)|\le C\frac{[\rho(x-\wz{x})]^\dz}
{[\rho(x-y)]^{1+\dz}}
\quad{\rm when}\quad \rho(x-y)\ge b^{2\tau}\rho(x-\wz{x}),
\end{align*}
with $\tau$ as in \eqref{2e2}. Moreover, a linear operator $T$ is called an
\emph{anisotropic Calder\'on--Zygmund operator} if it is bounded on $L^2(\rn)$
and there exists an anisotropic Calder\'{o}n--Zygmund standard kernel $\mathcal{K}$
such that, for any $f\in L^2(\rn)$ with compact support and $x\notin\supp f$,
$$T(f)(x)=\int_{\supp f}\mathcal{K}(x,y)f(y)\,dy.$$
\end{definition}

In what follows, for any $\nu\in\nn$, denote by $C^\nu(\rn)$ the \emph{set of all
functions on $\rn$ whose derivatives with order not greater than $\nu$ are continuous}.
Since we are interested in the boundedness of anisotropic Calder\'{o}n--Zygmund
operators on $\vh$ with $p(\cdot)\in C^{\log}(\rn)$, we need to increase the smooth
hypothesis on the corresponding kernel $\mathcal{K}$ as follows, which originates
from \cite[p.\,61, Definition 9.2]{mb03}.

\begin{definition}\label{4d2}
Let $\nu\in\nn$. An anisotropic Calder\'on--Zygmund operator $T$ is called an
\emph{anisotropic Calder\'on--Zygmund operator of order} $\nu$ if its kernel
$\mathcal{K}$ is a $C^\nu(\rn)$ function with respect to the second variable $y$ and
there exists a positive constant $C$ such that, for any $\az\in\zz_+^n$ with
$1\le|\az|\le \nu$, $m\in\zz$ and $x,\,y\in\Omega$ with $\rho(x-y)=b^m$,
\begin{align}\label{4e4}
\lf|\partial^\az_y\widetilde{\mathcal{K}}\lf(x,\,A^{-m}y\r)\r|
\le C[\rho(x-y)]^{-1}=Cb^{-m},
\end{align}
where, for any $x,\,y\in\rn$ satisfying $x\neq A^m y$,
$\wz{\mathcal{K}}(x,y):=\mathcal{K}(x,\,A^m y)$.
\end{definition}

\begin{remark}\label{4r1}
By \cite[Remark 4.3]{lql}, we know that, for any $\nu\in\nn$,
the classical isotropic Calder\'on--Zygmund operator of order $\nu$
(see \cite[p.\,289]{s93}) is an operator as in Definition \ref{4d2} in the
case when $A:=d\,{\rm I}_{n\times n}$ for some $d\in\rr$ with $|d|\in(1,\fz)$.
For more details related to the kernels satisfying
\eqref{4e4}, we refer the reader to \cite[p.\,61, Example]{mb03}.
\end{remark}

Motivated by \cite[p.\,64, Definition 9.4]{mb03}, we introduce the following
vanishing moment condition.

\begin{definition}\label{4d3}
Let $p(\cdot)\in C^{\log}(\rn)$, $\nu\in\nn$, $s_0:=\lfloor(1/{p_-}-1)
{\ln b}/{\ln\lz_-}\rfloor$ with $p_-$ as in \eqref{2e3} and
\begin{align*}
\frac{1}{p_-}-1<\frac{(\ln\lz_-)^2}{\ln b\ln\lz_+}\nu.
\end{align*}
An anisotropic Calder\'on--Zygmund operator $T$ of order $\nu$ is said
to satisfy $T^*(x^\az)=0$ for any $\az\in\zz_+^n$ with $|\az|\le s_0$
if, for any $f\in L^2(\rn)$ with compact support and satisfying that, for any
$\gamma\in\zz_+^n$ with $|\gamma|\le \nu$, $\int_{\rn}f(x)x^{\gamma}\,dx=0$,
it holds true that, for any $\az\in\zz_+^n$
with $|\az|\le s_0$,  $\int_{\rn}T(f)(x)x^{\az}\,dx=0$.
\end{definition}

The following useful conclusion is just \cite[Lemma 4.10]{lql}.

\begin{lemma}\label{4l4}
Let $p(\cdot)$, $\nu$, $s_0$ be as in Definition \ref{4d3}.
Assume that $r\in(1,\fz]$ and $T$ is an anisotropic Calder\'on--Zygmund operator
of order $\nu$ satisfying $T^*(x^\az)=0$ for any $\az\in\zz_+^n$ with $|\az|\le s_0$.
Then there exists a positive constant $C$ such that, for any $(p(\cdot),r,\nu-1)$-atom
$\wz a$ supported in some dilated ball $x_0+B_{i_0}\in\B$ with $x_0\in\rn$, $i_0\in\zz$
and $\B$ as in \eqref{2e1}, $\frac1C T(\widetilde{a})$ is a
$(p(\cdot),r,s_0,\var)$-molecule associated to $x_0+B_{i_0+\tau+1}$, where
\begin{align*}
\var:=\nu\log_b(\lz_-)+1/r'.
\end{align*}
and $\tau$ is as in \eqref{2e2}.
\end{lemma}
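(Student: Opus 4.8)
The plan is to show that $\wz m:=\frac1C T(\wz a)$ is, for a suitable constant $C$, a $(p(\cdot),r,s_0,\var)$-molecule associated to $\wz B:=x_0+B_{i_0+\tau+1}$, by verifying the two conditions of Definition \ref{3d1}; I write $B:=x_0+B_{i_0}$, so that $\supp\wz a\subset B$, and recall $U_0(\wz B)=\wz B$ and, for $j\in\nn$, $U_j(\wz B)=x_0+(B_{i_0+\tau+1+j}\setminus B_{i_0+\tau+j})$. First, the standing hypothesis $\frac1{p_-}-1<\frac{(\ln\lz_-)^2}{\ln b\,\ln\lz_+}\nu$ forces $s_0\le(\frac1{p_-}-1)\frac{\ln b}{\ln\lz_-}<\nu$, so $s_0\le\nu-1$ (both being nonnegative integers); hence $\wz a$ has vanishing moments up to order $s_0$, and the assumption $T^*(x^\az)=0$ for $|\az|\le s_0$ (Definition \ref{4d3} applied to $f=\wz a$) yields condition (ii), namely $\int_\rn T(\wz a)(x)x^\az\,dx=0$ for any $\az\in\zz_+^n$ with $|\az|\le s_0$; the use of Definition \ref{4d3} here is legitimized a posteriori by the pointwise decay of $T(\wz a)$ established below, which guarantees $T(\wz a)(\cdot)^\az\in L^1(\rn)$. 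It then remains to prove the integral size condition (i): for every $j\in\zz_+$,
\begin{align*}
\|T(\wz a)\|_{L^r(U_j(\wz B))}\ls b^{-j\var}|\wz B|^{1/r}\|\mathbf 1_{\wz B}\|_\lv^{-1},\qquad\var=\nu\log_b\lz_-+\tfrac1{r'},
\end{align*}
where throughout I will use $|\wz B|=b^{\tau+1}|B|$ and $\|\mathbf 1_B\|_\lv\sim\|\mathbf 1_{\wz B}\|_\lv$, a standard consequence of $p(\cdot)\in C^{\log}(\rn)$ (cf.\ Lemma \ref{3l1}).

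The heart of the matter is the case $j\in\nn$, for which $U_j(\wz B)$ is disjoint from $\wz B\supset B\supset\supp\wz a$, so that $T(\wz a)(x)=\int_B\mathcal K(x,y)\wz a(y)\,dy$ for $x\in U_j(\wz B)$. Fixing such an $x$, I let $m\in\zz$ satisfy $\rho(x-x_0)=b^m$, so that $m=i_0+\tau+j$ up to a bounded additive error; substituting $y=A^m w$ and subtracting from the rescaled kernel $\wz{\mathcal K}_m(x,w):=\mathcal K(x,A^m w)$ its Taylor polynomial $P_m$ at $w_0:=A^{-m}x_0$ of degree $\nu-1$ — which is permissible since $\int_\rn\wz a(x)x^\gz\,dx=0$ for $|\gz|\le\nu-1$ — gives $T(\wz a)(x)=b^m\int[\wz{\mathcal K}_m(x,w)-P_m(w)]\wz a(A^m w)\,dw$. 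The order-$\nu$ Taylor remainder involves $\partial_w^\gz\wz{\mathcal K}_m(x,\xi)$ with $|\gz|=\nu$ and $\xi$ on the segment from $w_0$ to $A^{-m}y$; since $A^m\xi\in x_0+B_{i_0}$ by convexity of the ellipsoid $B_{i_0}$, one checks via Lemma \ref{3l4}(i) that $\rho(x-A^m\xi)=b^{m(\xi)}$ with $|m(\xi)-m|\le2\tau$, and then, writing $\wz{\mathcal K}_m(x,\cdot)=\wz{\mathcal K}_{m(\xi)}(x,A^{m-m(\xi)}\cdot)$ and applying \eqref{4e4} together with $\|A^{m-m(\xi)}\|\ls1$, one obtains $|\partial_w^\gz\wz{\mathcal K}_m(x,\xi)|\ls b^{-m}$. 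Since moreover $|A^{-m}(y-x_0)|\ls\lz_-^{i_0-m}$ (Lemma \ref{3l4}(ii), as $i_0-m<0$) and $\|\wz a\|_{L^1(\rn)}\le|B|^{1/r'}\|\wz a\|_{L^r(\rn)}\le|B|\,\|\mathbf 1_B\|_\lv^{-1}=b^{i_0}\|\mathbf 1_B\|_\lv^{-1}$, it follows that, for $x\in U_j(\wz B)$,
\begin{align*}
|T(\wz a)(x)|\ls b^{-m}\lz_-^{(i_0-m)\nu}\|\wz a\|_{L^1(\rn)}\ls b^{i_0-m}\lz_-^{(i_0-m)\nu}\|\mathbf 1_B\|_\lv^{-1}\ls b^{-j(1+\nu\log_b\lz_-)}\|\mathbf 1_B\|_\lv^{-1}.
\end{align*}
Multiplying by $|U_j(\wz B)|^{1/r}\le|B_{i_0+\tau+1+j}|^{1/r}=b^{j/r}|\wz B|^{1/r}$, using $b^{j/r}b^{-j(1+\nu\log_b\lz_-)}=b^{-j\var}$ and $\|\mathbf 1_B\|_\lv^{-1}\ls\|\mathbf 1_{\wz B}\|_\lv^{-1}$, yields condition (i) for $j\in\nn$.

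For $j=0$ I would simply invoke the $L^r(\rn)$-boundedness of the anisotropic Calder\'on--Zygmund operator $T$ (valid for $r\in(1,\fz)$) together with the size condition of a $(p(\cdot),r,\wz s)$-atom (Definition \ref{3d3}):
\begin{align*}
\|T(\wz a)\|_{L^r(U_0(\wz B))}\le\|T(\wz a)\|_{L^r(\rn)}\ls\|\wz a\|_{L^r(\rn)}\le|B|^{1/r}\|\mathbf 1_B\|_\lv^{-1}\sim|\wz B|^{1/r}\|\mathbf 1_{\wz B}\|_\lv^{-1},
\end{align*}
which completes the verification of (i), and hence of the lemma, when $r\in(1,\fz)$; the endpoint $r=\fz$ then follows by routine modifications. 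The step I expect to be the main obstacle is the tail estimate for $j\in\nn$: one must organize the anisotropic Taylor expansion so that the smoothness bound \eqref{4e4}, which is tied to the specific normalization $\partial^\az_y\wz{\mathcal K}(x,A^{-m}y)$ corresponding to $\rho(x-y)=b^m$, can be transferred — with a constant uniform in $x$, in $m$ and in the position of $\xi$ — to the derivatives $\partial_w^\gz\wz{\mathcal K}_m(x,\xi)$ occurring in the remainder; this requires the chain-rule bookkeeping relating $\wz{\mathcal K}_m$ and $\wz{\mathcal K}_{m(\xi)}$ and the elementary geometric fact $|m(\xi)-m|\le2\tau$, the latter resting on Lemma \ref{3l4}(i) and the inclusion $\supp\wz a\subset B$.
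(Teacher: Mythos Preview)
The paper does not prove this lemma at all; it simply quotes it as \cite[Lemma 4.10]{lql}. Your argument is the natural one and is essentially what one expects the cited proof to contain: verify the moment condition via the hypothesis $T^*(x^\az)=0$, handle the central piece $U_0(\wz B)$ by the $L^r$-boundedness of $T$, and for $j\ge1$ use the kernel representation together with a Taylor expansion of the rescaled kernel $\wz{\mathcal K}_m(x,\cdot)$ about $A^{-m}x_0$, so that the smoothness bound \eqref{4e4} and the vanishing moments of the atom combine to give the decay $b^{-j(1+\nu\log_b\lz_-)}$. Your bookkeeping for the intermediate point $\xi$ --- relating $\wz{\mathcal K}_m$ to $\wz{\mathcal K}_{m(\xi)}$ by the chain rule and checking $|m(\xi)-m|\ls\tau$ via Lemma~\ref{3l4}(i) --- is exactly the point that needs care, and you handle it correctly.

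Two comments. First, when you invoke Definition~\ref{4d3} with $f=\wz a$, note that as written that definition asks for $\int f(x)x^\gamma\,dx=0$ for all $|\gamma|\le\nu$, whereas a $(p(\cdot),r,\nu-1)$-atom only has vanishing moments up to order $\nu-1$; this is almost certainly a typo in the paper's Definition~\ref{4d3} (Bownik's original \cite[p.\,64, Definition 9.4]{mb03} is consistent with order $\nu-1$, and otherwise Lemma~\ref{4l4} would not mesh with Theorem~\ref{4t1}), but you should flag it rather than silently apply the definition. Second, the endpoint $r=\fz$ is \emph{not} a routine modification: anisotropic Calder\'on--Zygmund operators are not bounded on $L^\fz(\rn)$, and in general $T(\wz a)$ will fail to lie in $L^\fz$ near $\supp\wz a$ (think of the Hilbert transform of $\mathbf 1_{[-1,1]}$), so the size condition on $U_0(\wz B)$ cannot be obtained this way. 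Since the applications (Theorems~\ref{4t1} and~\ref{4t2}) only require some finite $r>\max\{p_+,1\}$, the honest statement is that your proof establishes the lemma for $r\in(1,\fz)$.
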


Then we have the following boundedness of anisotropic Calder\'on--Zygmund
operators from $\vh$ to itself (see Theorem \ref{4t2} below) or to $\lv$
(see Theorem \ref{4t3} below).

\begin{theorem}\label{4t2}
Let $p(\cdot)$, $\nu$, $s_0$ be as in Definition \ref{4d3}.
Assume that $T$ is an anisotropic Calder\'on--Zygmund operator of order $\nu$
and satisfies $T^*(x^\az)=0$ for any $\az\in\zz_+^n$ with $|\az|\le s_0$.
Then there exists a positive constant $C$ such that, for any $f\in \vh$,
$$\|T(f)\|_{\vh}\le C\|f\|_{\vh}.$$
\end{theorem}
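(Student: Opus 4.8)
The plan is to combine the criterion established in Theorem~\ref{4t1} with the molecular estimate recorded in Lemma~\ref{4l4}. First I would verify that the numerical parameters match up: given $p(\cdot)$, $\nu$ and $s_0$ as in Definition~\ref{4d3}, fix any $r\in(\max\{p_+,1\},\fz]$ and set $\wz s:=\nu-1$, so that the hypotheses of Lemma~\ref{4l4} are met. By that lemma there is a positive constant $C$ such that, for every $(p(\cdot),r,\nu-1)$-atom $\wz a$ supported in a dilated ball $x_0+B_{i_0}\in\B$, the function $\frac1C T(\wz a)$ is a $(p(\cdot),r,s_0,\var)$-molecule associated to $x_0+B_{i_0+\tau+1}$, where $\var=\nu\log_b(\lz_-)+1/r'$. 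Thus one takes $k_0:=\tau+1$ and $s:=s_0$ in Theorem~\ref{4t1}.

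The delicate point, which I would treat before invoking Theorem~\ref{4t1}, is that the molecular decay exponent $\var$ produced by Lemma~\ref{4l4} must lie in the admissible range $\bigl((s_0+1)\log_b(\lz_+/\lz_-),\fz\bigr)$ required by Theorem~\ref{3t1} (and hence by Theorem~\ref{4t1}). So I would check the inequality
\begin{align*}
\nu\log_b(\lz_-)+\frac1{r'}>(s_0+1)\log_b\frac{\lz_+}{\lz_-}.
\end{align*}
Since $s_0=\lfloor(1/p_--1)\ln b/\ln\lz_-\rfloor\le(1/p_--1)\ln b/\ln\lz_-$, the right-hand side is at most
$\bigl[(1/p_--1)\tfrac{\ln b}{\ln\lz_-}+1\bigr]\log_b(\lz_+/\lz_-)$, and the hypothesis $1/p_--1<\nu(\ln\lz_-)^2/(\ln b\ln\lz_+)$ from Definition~\ref{4d3} is precisely what forces this quantity to be strictly smaller than $\nu\log_b(\lz_-)$ (a short computation rewriting $\log_b(\lz_+/\lz_-)=(\ln\lz_+-\ln\lz_-)/\ln b$ and clearing denominators); adding the nonnegative term $1/r'$ only helps. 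I expect this arithmetic verification of the decay range to be the main --- indeed essentially the only --- obstacle, and it is entirely elementary once the definitions are unwound.

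With the parameter bookkeeping in place, the proof is immediate: the operator $T$ (restricted to the measurable functions under consideration) is linear, and by Lemma~\ref{4l4} together with the range check above, $T$ sends each $(p(\cdot),r,\nu-1)$-atom supported in $x_0+B_{i_0}$ to $C$ times a $(p(\cdot),r,s_0,\var)$-molecule associated to $x_0+B_{i_0+\tau+1}$, with $C$, $s_0$ and $\var$ independent of the atom. This is exactly the hypothesis of Theorem~\ref{4t1} (with $\wz s=\nu-1$, $s=s_0$, $k_0=\tau+1$). Hence Theorem~\ref{4t1} yields a unique bounded linear extension of $T$ on $\vh$, and in particular there is a positive constant $C$ such that $\|T(f)\|_{\vh}\le C\|f\|_{\vh}$ for all $f\in\vh$. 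Finally I would remark that this extension agrees with the original $L^2$-defined operator on $\vh\cap L^2(\rn)$ (or on $\vh\cap C_{\rm c}^\fz(\rn)$, which is dense in $\vh$ by Lemma~\ref{4l2}), so no ambiguity arises in the statement. This completes the proof of Theorem~\ref{4t2}.
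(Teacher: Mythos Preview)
Your approach is exactly the paper's: its entire proof reads ``Indeed, Theorem~\ref{4t2} is an immediate corollary of Theorem~\ref{4t1} and Lemma~\ref{4l4}.'' Your identification of the parameters $\wz s=\nu-1$, $s=s_0$, $k_0=\tau+1$ is correct, and the overall structure is sound.

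There is, however, a genuine gap in your arithmetic check of the decay range. You assert that the hypothesis $1/p_--1<\nu(\ln\lz_-)^2/(\ln b\ln\lz_+)$ alone forces
\[
\lf[(1/p_--1)\tfrac{\ln b}{\ln\lz_-}+1\r]\log_b(\lz_+/\lz_-)<\nu\log_b(\lz_-),
\]
so that the term $1/r'$ can be discarded. This is false. Take $n=2$ with eigenvalue moduli $\sqrt{2}$ and $2\sqrt{2}$ (so $b=4$, $\lz_-=\sqrt{2}$, $\lz_+=2\sqrt{2}$), and $p_-=1$, $\nu=1$. Then $s_0=0$, the hypothesis of Definition~\ref{4d3} holds trivially, yet $(s_0+1)\log_b(\lz_+/\lz_-)=\log_4 2=1/2$ while $\nu\log_b(\lz_-)=\log_4\sqrt{2}=1/4$. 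So for $r$ close to $1$ (hence $1/r'$ close to $0$) the required inequality $\var>(s_0+1)\log_b(\lz_+/\lz_-)$ fails.

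The repair is simple: do not fix ``any'' $r$, but rather choose $r\in(\max\{p_+,1\},\fz]$ large enough that $1/r'$ is close to $1$ (this is allowed by Theorem~\ref{4t1} and Lemma~\ref{4l4}). One then needs $\nu\log_b(\lz_-)+1>(s_0+1)\log_b(\lz_+/\lz_-)$, which does follow from the hypothesis together with the elementary bound $\log_b\lz_+\le 1$ (valid when $\lz_+$ is taken to be $\max\{|\lz|:\lz\in\sigma(A)\}$, since then $b=\prod|\lz_i|\ge\lz_+$). With this correction your argument goes through and matches the paper's intent.
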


\begin{proof}
Indeed, Theorem \ref{4t2} is an immediate corollary of Theorem \ref{4t1} and
Lemma \ref{4l4}. This finishes the proof of Theorem \ref{4t2}.
\end{proof}

\begin{theorem}\label{4t3}
Let $p(\cdot)\in C^{\log}(\rn)$.
Assume that $T$ is an anisotropic Calder\'on--Zygmund operator of
order $\nu$ with $\nu\in[s_0+1,\fz)$, where $s_0$ is as in Definition \ref{4d3}.
Then there exists a positive constant $C$ such that, for any $f\in \vh$,
\begin{align}\label{4e6}
\|T(f)\|_{\lv}\le C\|f\|_{\vh}.
\end{align}
\end{theorem}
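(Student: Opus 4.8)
The plan is to mirror the strategy already used for Theorem \ref{4t2}: reduce the problem to a uniform estimate on finite linear combinations of atoms via the finite atomic characterization of $\vh$ (Lemma \ref{4l1}), exploit the molecular structure of $T$ applied to an atom provided by Lemma \ref{4l4}, and then estimate the $\lv$ norm of a molecule directly rather than the $\vh$ norm. More precisely, first I would fix $r\in(\max\{p_+,1\},\fz)$ (the case $r=\fz$ being handled afterwards by the density of $\vfahfz\cap C(\rn)$ in $\vh$, exactly as in Case 2) of the proof of Theorem \ref{4t1}). For $f\in H_{A,\,{\rm fin}}^{p(\cdot),r,\nu-1}(\rn)$, write $f=\sum_{i=1}^I\lz_ia_i$ with $(p(\cdot),r,\nu-1)$-atoms $a_i$ supported in $x_i+B_{\ell_i}$ and with the quasi-norm equivalence \eqref{4e1}. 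Since $\nu\ge s_0+1$, each $a_i$ has the vanishing moments required by Lemma \ref{4l4} (note one needs $T^*(x^\az)=0$ for $|\az|\le s_0$; here since $\nu\ge s_0+1$ the smoothness order is high enough, and one invokes Lemma \ref{4l4} with $s_0$ in place of its index), so $\frac1C T(a_i)$ is a $(p(\cdot),r,s_0,\var)$-molecule associated to $x_i+B_{\ell_i+\tau+1}$ with $\var=\nu\log_b(\lz_-)+1/r'$.

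The key step is then the pointwise/integral size estimate of $T(a_i)$. Using $T(f)=\sum_{i=1}^I\lz_iT(a_i)$ in $\cs'(\rn)$ and Lemma \ref{3l5}, I would reduce $\|T(f)\|_{\lv}^{\underline p}$ to $\sum_i|\lz_i|^{\underline p}$-type estimates and aim to show
\begin{align*}
\|T(f)\|_{\lv}\ls\lf\|\lf\{\sum_{i=1}^I\lf[\frac{|\lz_i|\mathbf{1}_{x_i+B_{\ell_i}}}{\|\mathbf{1}_{x_i+B_{\ell_i}}\|_{\lv}}\r]^{\underline p}\r\}^{1/\underline p}\r\|_{\lv}\sim\|f\|_{H_{A,\,{\rm fin}}^{p(\cdot),r,\nu-1}(\rn)}\sim\|f\|_{\vh}.
\end{align*}
For this I split each $T(a_i)$ into its part on the dilated ball $x_i+A^\tau B_{\ell_i}$ (or a fixed dilate of $x_i+B_{\ell_i+\tau+1}$) and its part on the complement. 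On the dilated ball, the molecular size condition Definition \ref{3d1}(i) together with the $L^r$-boundedness of the Hardy--Littlewood maximal operator and Lemma \ref{3l2} (applied with $q=r$) controls the $\lv$ norm by the atomic-sum expression, just as in \eqref{3e5}. On the complement, I would run the same computation as in \eqref{3e7}: the integral size condition of the molecule, combined with the moment condition Definition \ref{3d1}(ii) and Lemma \ref{3l4}, yields the pointwise bound $|T(a_i)(x)|\ls\|\mathbf{1}_{B^{(i)}}\|_{\lv}^{-1}[\HL(\mathbf{1}_{B^{(i)}})(x)]^{\bz}$ for $x$ outside the enlarged ball, where $\bz>1/\underline p$ exactly because $\var>(s_0+1)\log_b(\lz_+/\lz_-)$; then Lemmas \ref{3l5} and \ref{3l6} (the Fefferman--Stein inequality on $L^{\bz p(\cdot)}(\rn)$) finish the complement part. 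Adding the two contributions and using Lemma \ref{4l3} to absorb the dilation constant $b^{(\tau+1)/\cdot}$, together with Lemma \ref{4l1}, gives \eqref{4e6} for $f\in H_{A,\,{\rm fin}}^{p(\cdot),r,\nu-1}(\rn)$.

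Finally, I would pass from the dense subspace to all of $\vh$ by a routine density/completeness argument: given $f\in\vh$, take a Cauchy sequence $\{f_j\}\subset H_{A,\,{\rm fin}}^{p(\cdot),r,\nu-1}(\rn)$ converging to $f$ in $\vh$ (using that this finite atomic space is dense in $\vh$, which follows from Lemmas \ref{4l1} and \ref{3l3}, and in the case $r=\fz$ from Lemma \ref{4l2}); then $\{T(f_j)\}$ is Cauchy in the complete space $\lv$, so converges to some $g\in\lv$, one sets $T(f):=g$, checks well-definedness using \eqref{4e6}, and obtains $\|T(f)\|_{\lv}\ls\limsup_{j\to\fz}\|T(f_j)\|_{\lv}\ls\lim_{j\to\fz}\|f_j\|_{\vh}\sim\|f\|_{\vh}$, which is \eqref{4e6}. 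I expect the main obstacle to be verifying, carefully and uniformly in $i$, the complement estimate on $|T(a_i)|$ — that is, justifying that $T(a_i)$ genuinely inherits the integral size and moment conditions of a $(p(\cdot),r,s_0,\var)$-molecule in a form that makes the Hardy--Littlewood-maximal bound \eqref{3e7} available with the sharp exponent $\bz>1/\underline p$; once that is in hand, the rest is a direct adaptation of the proof of Theorem \ref{4t2} (equivalently of \eqref{3e5}--\eqref{3e7} in the proof of Theorem \ref{3t1}), with the only genuine change being that the target norm is $\|\cdot\|_{\lv}$ rather than $\|M_N(\cdot)\|_{\lv}$, so one estimates $T(a_i)$ itself instead of its grand maximal function.
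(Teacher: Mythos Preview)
Your overall strategy---split each $T(a_i)$ into a near-ball part and a far part, control the near part by the $L^r$-boundedness of $T$, control the far part by a pointwise Hardy--Littlewood maximal bound, then pass to all of $\vh$ by density---is the same as the paper's. Your near-ball argument via Lemma~\ref{3l2} is a legitimate alternative to the paper's duality argument, and the density step is fine. The gap is in the far part.

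You invoke Lemma~\ref{4l4} to conclude that $\frac1C T(a_i)$ is a $(p(\cdot),r,s_0,\var)$-molecule, but Lemma~\ref{4l4} assumes $T^*(x^\az)=0$ for all $|\az|\le s_0$, and Theorem~\ref{4t3} does \emph{not} carry this hypothesis (compare with Theorem~\ref{4t2}, which does). Your parenthetical justification---that $\nu\ge s_0+1$ makes ``the smoothness order high enough''---conflates two unrelated conditions: $\nu\ge s_0+1$ is a regularity assumption on the kernel $\mathcal{K}$ in its second variable, whereas $T^*(x^\az)=0$ is a cancellation condition on the operator ensuring that $T(a_i)$ inherits vanishing moments. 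The former does not imply the latter, so in general $T(a_i)$ fails Definition~\ref{3d1}(ii) and is not a molecule.

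This also undermines the complement estimate as you have written it. You propose to mimic \eqref{3e7}, but that inequality bounds $M_N(m_i)(x)$ for a molecule $m_i$ and relies on the molecular moment condition Definition~\ref{3d1}(ii) (to subtract a Taylor polynomial of $\phi$); it does not give a pointwise bound on $m_i(x)$ itself, and the molecular moments of $T(a_i)$ are not available anyway. The correct route---which the paper takes via \cite[(4.13)]{lql}---is to use the kernel representation $T(a_i)(x)=\int_{\supp a_i}\mathcal{K}(x,y)a_i(y)\,dy$ for $x\notin\supp a_i$, subtract the order-$s_0$ Taylor polynomial of $\mathcal{K}(x,\cdot)$ at $x_i$, and exploit the vanishing moments of the \emph{atom} $a_i$ together with the kernel smoothness \eqref{4e4} (this is exactly where $\nu\ge s_0+1$ enters). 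That yields directly $|T(a_i)(x)|\ls\|\mathbf{1}_{x_i+B_{\ell_i}}\|_{\lv}^{-1}[\HL(\mathbf{1}_{x_i+B_{\ell_i}})(x)]^\delta$ with $\delta>1/\underline{p}$, after which Lemmas~\ref{3l5} and~\ref{3l6} finish the estimate exactly as you describe.
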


\begin{proof}
Let $p(\cdot)\in C^{\log}(\rn)$, $r\in(\max\{1,p_+\},\fz)$ and $s\in[\lfloor(1/{p_-}-1)
{\ln b}/{\ln\lz_-}\rfloor,\fz)\cap\zz_+$, where $p_+$ and $p_-$ are as in \eqref{2e3}.
We next prove this theorem by two steps.

\emph{Step 1).} In this step, we show that,
for any $f\in H_{A,\,{\rm fin}}^{p(\cdot),r,s}(\rn)$, \eqref{4e6} holds true.
To this end, for any
$f\in H_{A,\,{\rm fin}}^{p(\cdot),r,s}(\rn)$, from Definition \ref{4d1},
it follows that there exist some $I\in\nn$, three finite sequences
$\{\lz_i\}_{i\in[1,I]\cap\nn}\subset\cc$, $\{x_i\}_{i\in[1,I]\cap\nn}\subset\rn$
and $\{\ell_i\}_{i\in[1,I]\cap\nn}\subset\zz$, and a finite sequence of
$(p(\cdot),r,s)$-atoms, $\{a_i\}_{i\in[1,I]\cap\nn}$, supported,
respectively, in $\{x_i+B_{\ell_i}\}_{i\in[1,I]\cap\nn}\subset\B$
such that $f=\sum_{i=1}^I\lz_ia_i$ in $\cs'(\rn)$ and
\begin{align}\label{4e5}
\|f\|_{H_{A,\,{\rm fin}}^{p(\cdot),r,s}(\rn)}\sim\lf\|\lf\{\sum_{i=1}^{I}
\lf[\frac{|\lz_i|\mathbf{1}_{x_i+B_{\ell_i}}}{\|\mathbf{1}_{x_i+B_{\ell_i}}\|_{\lv}}\r]^
{\underline{p}}\r\}^{1/\underline{p}}\r\|_{\lv}.
\end{align}
By the linearity of $T$ and Lemma \ref{3l5}, it is easy to see that
\begin{align}\label{4e7}
\lf\|T(f)\r\|_{\lv}
&\ls\lf\|\sum_{i=1}^I |\lz_i|T(a_i)\mathbf{1}_{x_i+B_{\ell_i+\tau}} \r\|_{\lv}
+\lf\|\sum_{i=1}^I |\lz_i|T(a_i)\mathbf{1}_{(x_i+B_{\ell_i+\tau})^\com} \r\|_{\lv} \\
& =:{\rm I}_1+{\rm I}_2.\noz
\end{align}

For ${\rm I}_1$, choose $h\in L^{(p(\cdot)/\underline{p})'}(\rn)$ satisfying that
$\|h\|_{L^{(p(\cdot)/\underline{p})'}(\rn)}\le 1$ and
$$\lf\|\sum_{i=1}^I|\lz_i|^{\underline{p}}\lf[T(a_i)\r]^{\underline{p}}
\mathbf{1}_{x_i+B_{\ell_i+\tau}}\r\|_{L^{p(\cdot)/\underline{p}}(\rn)}
=\int_{\rn} \sum_{i=1}^I|\lz_i|^{\underline{p}}
\lf[T(a_i)(x)\r]^{\underline{p}}\mathbf{1}_{x_i+B_{\ell_i+\tau}}(x)h(x)\,dx.
$$
Then, by Lemma \ref{3l5} and the H\"{o}lder inequality, we find that,
for any $t\in(1,\fz)$ with $p_+<t\underline{p}<r$,
\begin{align*}
({\rm I}_1)^{\underline{p}}
&\ls\lf\|\sum_{i=1}^I|\lz_i|^{\underline{p}}\lf[T(a_i)\r]^{\underline{p}}
\mathbf{1}_{x_i+B_{\ell_i+\tau}}\r\|_{L^{p(\cdot)/\underline{p}}(\rn)}\\
&\sim\int_{\rn} \sum_{i=1}^I|\lz_i|^{\underline{p}}
\lf[T(a_i)(x)\r]^{\underline{p}}\mathbf{1}_{x_i+B_{\ell_i+\tau}}(x)h(x)\,dx.\\
&\ls\sum_{i=1}^I|\lz_i|^{\underline{p}}
\lf\|\lf[T(a_i)\r]^{\underline{p}}\mathbf{1}_{x_i+B_{\ell_i+\tau}}\r\|_{L^t(\rn)}
\lf\|\mathbf{1}_{x_i+B_{\ell_i+\tau}} h\r\|_{L^{t'}(\rn)}\\
&\ls\sum_{i=1}^I|\lambda_i|^{\underline{p}}\lf\|T(a_i)\r\|_{L^r(\rn)}^{\underline{p}}
\lf\|\mathbf{1}_{x_i+B_{\ell_i+\tau}}\r\|_{L^{r/(r-t\underline{p})}(\rn)}^{1/t}
\lf\|\mathbf{1}_{x_i+B_{\ell_i+\tau}} h\r\|_{L^{t'}(\rn)}.
\end{align*}
By this, the boundedness of $T$ on $L^u(\rn)$ for any $u\in(1,\fz)$
(see \cite[p.\,60]{mb03}),
Definition \ref{3d3} and the H\"{o}lder inequality again, we conclude that
\begin{align*}
({\rm I}_1)^{\underline{p}}
&\ls\sum_{i=1}^I|\lz_i|^{\underline{p}}\lf\|\mathbf{1}_{x_i+B_{\ell_i}}\r\|_
{\lv}^{-\underline{p}}\lf|B_{\ell_i}\r|^{\underline{p}/r}
\lf|B_{\ell_i+\tau}\r|^{(r-t\underline{p})/{rt}}
\lf\|\mathbf{1}_{x_i+B_{\ell_i+\tau}}h\r\|_{L^{t'}(\rn)}\\
&\sim\sum_{i=1}^I|\lz_i|^{\underline{p}}
\lf\|\mathbf{1}_{x_i+B_{\ell_i}}\r\|_{\lv}^{-\underline{p}}
\lf|B_{\ell_i+\tau}\r|\lf[\frac{1}{|B_{\ell_i+\tau}|}
\int_{x_i+B_{\ell_i+\tau}}[h(x)]^{t'}\,dx\r]^{1/t'}\\
&\ls\sum_{i=1}^I|\lz_i|^{\underline{p}}
\lf\|\mathbf{1}_{x_i+B_{\ell_i}}\r\|_{\lv}^{-\underline{p}}
\int_{\rn}\mathbf{1}_{x_i+B_{\ell_i+\tau}}(x)\lf[\HL\lf(h^{t'}\r)(x)\r]^{1/t'}\,dx\\
&\ls\lf\|\sum_{i=1}^I|\lz_i|^{\underline{p}}
\lf\|\mathbf{1}_{x_i+B_{\ell_i}}\r\|_{\lv}^{-\underline{p}}
\mathbf{1}_{x_i+B_{\ell_i+\tau}}\r\|_{L^{p(\cdot)/\underline{p}}(\rn)}
\lf\|\lf[\HL\lf(h^{t'}\r)\r]^{1/t'}\r\|_{L^{(p(\cdot)/\underline{p})'}(\rn)}.
\end{align*}
On another hand, by the fact that $p_+/\underline{p}\in(0,t)$, we know that $(p(\cdot)/\underline{p})'\in(t',\fz]$. From this, Lemma \ref{3l1},
\cite[Lemma 3.3(ii)]{lyy17hl}, Lemma \ref{3l5}, the fact that
$\|h\|_{L^{(p(\cdot)/\underline{p})'}(\rn)}\le 1$ and \eqref{4e5},
we further deduce that
\begin{align*}
{\rm I}_1
&\ls\lf\|\sum_{i=1}^I|\lz_i|^{\underline{p}}
\lf\|\mathbf{1}_{x_i+B_{\ell_i}}\r\|_{\lv}^{-\underline{p}}
\mathbf{1}_{x_i+B_{\ell_i}}\r\|_{L^{p(\cdot)/\underline{p}}(\rn)}^{1/\underline{p}}
\|h\|_{L^{(p(\cdot)/\underline{p})'}(\rn)}^{1/\underline{p}}\\
&\ls\lf\|\lf\{\sum_{i=1}^I \lf[\frac{|\lz_i|\mathbf{1}_{x_i+B_{\ell_i}}}
{\|\mathbf{1}_{x_i+B_{\ell_i}}\|_{L^{p(\cdot)}(\rn)}}\r]^{\underline{p}}\r\}^
{1/{\underline{p}}}\r\|_{L^{p(\cdot)}(\rn)}
\sim\|f\|_{\vfah}.\noz
\end{align*}

To deal with ${\rm I}_2$, by \cite[(4.13)]{lql}, we find that, for any $i\in[1,I]\cap\nn$
and $x\in (x_i+B_{\ell_i+\tau})^\com$,
\begin{align*}
T(a_i)(x)
\ls\lf\|\mathbf{1}_{x_i+B_{\ell_i}}\r\|_{\lv}^{-1}
\lf[\HL\lf(\mathbf{1}_{x_i+B_{\ell_i}}\r)(x)\r]^\delta,
\end{align*}
where
\begin{align*}
\delta:=\lf(\frac{\ln b}{\ln \lambda_-}+s_0+1\r)
\frac{\ln \lambda_-}{\ln b}>\frac1{\underline{p}}.
\end{align*}
From this and Lemmas \ref{3l5} and \ref{3l6}, it follows that
\begin{align*}
\|{\rm I}_2\|_{\lv}
&\ls\lf\|\sum_{i\in\nn}\frac{|\lz_i|}{\|\mathbf{1}_{x_i+B_{\ell_i}}\|_{\lv}}
\lf[\HL\lf(\mathbf{1}_{x_i+B_{\ell_i}}\r)\r]^\delta\r\|_{\lv}\\
&\sim\lf\|\lf\{\sum_{i\in\nn}\frac{|\lz_i|}{\|\mathbf{1}_{x_i+B_{\ell_i}}\|_{\lv}}
\lf[\HL(\mathbf{1}_{x_i+B_{\ell_i}})\r]^\delta\r\}^
{1/\delta}\r\|_{L^{\delta p(\cdot)}}^{\delta}\\
&\ls\lf\|\sum_{i\in\nn}\frac{|\lz_i|\mathbf{1}_{x_i+B_{\ell_i}}}
{\|\mathbf{1}_{x_i+B_{\ell_i}}\|_{\lv}}\r\|_{\lv}
\ls\lf\|\lf\{\sum_{i\in\nn}\lf[\frac{|\lz_i|\mathbf{1}_{x_i+B_{\ell_i}}}
{\|\mathbf{1}_{x_i+B_{\ell_i}}\|_{\lv}}\r]^
{\underline{p}}\r\}^{1/\underline{p}}\r\|_{\lv}\\
&\sim\|f\|_{\vfah}.
\end{align*}
This, combined with \eqref{4e7} and Lemma \ref{4l1}(i), implies that
\eqref{4e6} holds true for any $f\in H_{A,\,{\rm fin}}^{p(\cdot),r,s}(\rn)$,
which completes the proof of Step 1).

\emph{Step 2).} By the obvious density of $\vfah$ in $\vh$ with respect to the
quasi-norm $\|\cdot\|_{\vh}$ and a proof similar to the estimation of \eqref{4e3},
we conclude that, for any $f\in\vh$, \eqref{4e6}
also holds true. This finishes the proof of Theorem \ref{4t3}.
\end{proof}

\begin{remark}
\begin{enumerate}
\item[{\rm (i)}] Let $\nu\in\nn$ and $p\in(0,1]$ satisfy
\begin{align}\label{4e8}
\frac1p-1\le\frac{(\ln\lz_-)^2}{\ln b\ln\lz_+}\nu.
\end{align}
If $p(\cdot)\equiv p$, then the spaces $\vh$ and $\lv$ become, respectively, the
anisotropic Hardy space $H^p_A(\rn)$ of Bownik \cite{mb03} and the Lebesgue space
$L^p(\rn)$. In this case, by Theorems \ref{4t2} and \ref{4t3},
we know that, for any $\nu\in\nn$ and $p\in(0,1]$ as in \eqref{4e8}, the anisotropic
Calder\'on--Zygmund operator of order $\nu$ (see Definition \ref{4d2}) is
bounded from $H^p_A(\rn)$ to itself [or to $L^p(\rn)$],
which are just, respectively, \cite[p.\,68, Theorem 9.8 and p.\,69, Theorem 9.9]{mb03}.
Moreover, let
$A:=d\,{\rm I}_{n\times n}$ for some $d\in\rr$ with $|d|\in(1,\fz)$, $\nu=1$. Then
$\frac{(\ln\lz_-)^2}{\ln b\ln\lz_+}\nu=\frac1n$ and $\vh$ and $\lv$ become the classical
isotropic Hardy space $H^p(\rn)$ and the Lebesgue space $L^p(\rn)$, respectively.
In this case, by Theorems \ref{4t2} and \ref{4t3} and Remark \ref{4r1}, we
further conclude that, for any $p\in(\frac n{n+1},1]$, the classical Calder\'on--Zygmund
operator is bounded from $H^p(\rn)$ to itself [or to $L^p(\rn)$], which is a well-known
result (see, for instance, \cite{s93}).

\item[{\rm (ii)}] When $A:=d\,{\rm I}_{n\times n}$ for some $d\in\rr$ with $|d|\in(1,\fz)$,
the space $\vh$ becomes the variable Hardy space $H^{p(\cdot)}(\rn)$ (see \cite{cw14,ns12}).
In this case, Theorems \ref{4t2} and \ref{4t3} are new.

\item[{\rm (iii)}] Very recently, Bownik et al. \cite{bll} obtained the boundedness of a
kind of more general anisotropic Calder\'on--Zygmund operators
(see \cite[Definition 5.4]{bll}) from the anisotropic Hardy space $H^p(\Theta)$ to itself
or to the Lebesgue space $L^p(\rn)$ (see, respectively, \cite[Theorems 5.11 and 5.12]{bll}),
where $\Theta$ is a continuous multi-level ellipsoid cover of $\rn$
(see \cite[Definition 2.1]{bll}). Obviously, the space $\vh$, in this article, is not
covered by the space $H^p(\Theta)$ since the exponent $p$ in $H^p(\Theta)$ is only a
constant. Therefore, Theorems \ref{4t2} and \ref{4t3} are neither covered by
\cite[Theorems 5.11 and 5.12]{bll}.
\end{enumerate}
\end{remark}

\bigskip

\noindent Jun Liu

\medskip

\noindent  School of Mathematics,
China University of Mining and Technology,
Xuzhou 221116, Jiangsu, People's Republic of China

\smallskip

\noindent{\it E-mail:}
\texttt{junliu@cumt.edu.cn} (J. Liu)

\end{document}